\newcommand{\bach}{\mathfrak{B}}
\DeclareMathOperator{\Id}{\rm Id}
\def\Chr#1#2#3{\Gamma_{#1#2}{}^#3{}}
\newcommand{\ce}{\mathcal{E}}
\newcommand{\GammaD}{\Gamma} 
\newcommand{\affcon}{\nabla} 
\begin{document}
\def\blue{\color{blue}}
\def\red{\color{red}}
\def\black{\color{black}}
\def\green{\color{green}}
\def\magenta{\color{magenta}}\
\newcommand{\grad}{\mbox{\rm grad}\,}
\newtheorem{theorem}{Theorem}[section]
\newtheorem{lemma}[theorem]{Lemma}
\newtheorem{remark}[theorem]{Remark}
\newtheorem{definition}[theorem]{Definition}
\newtheorem{corollary}[theorem]{Corollary}
\newtheorem{example}[theorem]{Example}
\newtheorem{ansatz}[theorem]{Ansatz}
\newtheorem{xcase}{Case}[subsection]
\newtheorem{ycase}{Case}[section]
\makeatletter
\renewcommand{\theequation}{%
\thesection.\alph{equation}} \@addtoreset{equation}{section}
\makeatother
\title[Bach Flat Manifolds]{Constructing Bach Flat Manifolds of signature $(2,2)$ using the modified Riemannian extension}
\author[Calvi\~no-Louzao]{E. Calvi\~no-Louzao, E. Garc\'{i}a-R\'{i}o, P. Gilkey, \\
I. Guti\'errez-Rodr\'iguez, and
R. V\'{a}zquez-Lorenzo}
\address{ECL: Conseller\'\i a de Cultura, Educaci\'on e Ordenaci\'on Universitaria, Edificio Administrativo San Caetano, 15781 Santiago de Compostela, Spain}
\email{{estebcl@edu.xunta.es}}
\address{EGR-IGR: Faculty of Mathematics, University of Santiago de Compostela, 15782 Santiago de Compostela, Spain}
\email{eduardo.garcia.rio@usc.es; ixcheldzohara.gutierrez@usc.es}
\address{PG: Mathematics Department, \; University of Oregon, \;\; Eugene \; OR 97403, \; USA}
\email{gilkey@uoregon.edu}
\address{RVL: Department of Mathematics, IES de Ribadeo Dionisio Gamallo, 27700 Ribadeo, Spain}
\email{{ravazlor@edu.xunta.es}}
\thanks{Supported by projects ED431F 2017/03, and MTM2016-75897-P (Spain).}
\subjclass[2010]{53C21, 53C50, 53B30, 53A15}
\keywords{Riemannian extension, Bach tensor, VSI manifold, Weyl invariant}

\begin{abstract} We  use the modified Riemannian extension of an affine surface to construct
Bach flat manifolds. As all these examples
are VSI (vanishing scalar invariants), we shall construct scalar invariants which are not of Weyl
type to distinguish them. We illustrate this phenomena in the context of homogeneous affine surfaces.
\end{abstract}
\maketitle

\section{Introduction}
The gravitational field equations in General Relativity arise from the Hilbert-Einstein functional variation of the metric. 
Different modifications to General Relativity have been extensively studied in a quest for a quantum 
theory of gravity. Conformal gravity is a theory of gravity in four dimensions which is invariant under
conformal transformations (hence sensitive to angles but not distances).  Indeed, any Weyl transformation
of the metric, $g_{ij} \mapsto \Psi^2(x)g_{ij}$, is an exact symmetry of this action.
In the simplest form, its action consists of the $L^2$-norm of Weyl curvature tensor 
$S_{\operatorname{conf}}:=\int d^4 x \,\sqrt{g}\, \|W\|^2$ (see, for example  \cite{Maldacena, Mannheim} and 
the references therein for more information). 
The field equations of four-dimensional conformal gravity therefore require the vanishing of the Bach tensor. 

It is a well known fact that the solutions of Einstein gravity are also solutions of conformal
gravity. But conformal gravity has other solutions. Indeed, since the Bach tensor vanishes identically for Einstein metrics and since the Bach tensor is conformally invariant, 
the conformally Einstein metrics provide
a large class of solutions.
A more intriguing problem is the construction of strict solutions to conformal gravity, meaning those
which are neither conformally Einstein nor half conformally flat. We refer to the work in
\cite{LN10, LLPV13,NP01} for examples of strictly Bach flat four-manifolds (see also
\cite{AGS13, BDE16}). Nevertheless, there is a paucity of Bach flat manifolds
which are not conformally Einstein.

A modification of the classical Patterson-Walker Riemannian extension  \cite{PW52} was used in
\cite{CGGV} to provide a new source of strictly Bach flat metrics which support gradient Ricci solitons.
This construction requires the existence of a background affine surface admitting a parallel nilpotent tensor 
field, which is a rather restrictive condition (see \cite{CGGGV18}).
In this paper, we shall generalize the construction of \cite{CGGV} to characterize Bach flat 
Riemannian extensions of affine surfaces admitting a nilpotent structure. We use the
 Cauchy-Kovalevski Theorem to show that any such modified Riemannian extension can be locally deformed to a Bach flat one in the real analytic setting.  It is worth 
emphasizing
  that any real analytic affine surface gives rise to a
(locally defined) Bach flat Riemannian extension.
We show that all these metrics have vanishing scalar curvature invariants (VSI). 
For that reason, we shall
introduce suitable invariants which are not of Weyl type to distinguish different classes; these
invariants are, of course, of interest in their own right.

Let  $\mathcal{N}=(N,g)$ be a pseudo-Riemannian manifold and let
${}^g\nabla$ be  the associated Levi Civita connection. With our sign convention, the
curvature operator takes the form
$R(X,Y):={}^g\nabla_X{}^g\nabla_Y-{}^g\nabla_Y{}^g\nabla_X-{}^g\nabla_{[X,Y]}$. Let $W$ be the 
Weyl conformal curvature tensor, and let $\rho$ be the Ricci tensor.
Adopt the {\it Einstein convention} and sum over repeated indices.
The {\it Bach tensor} of $\mathcal{N}$ is the conformally invariant,  trace-free, and divergence-free symmetric $(0,2)$-tensor field  given by setting:
$$
\mathfrak{B}_{ij}:
={}^g\nabla^k\, {}^g\nabla^\ell W_{kij\ell}+\textstyle\frac12\rho^{k\ell}W_{kij\ell}\,.
$$
We say that $\mathcal{N}$ is {\it Bach flat} if $\mathfrak{B}=0$.
We shall be interested in the case that $\mathcal{N}$ has neutral signature $(2,2)$. 
Let $\mathcal{M}=(M,\nabla)$ be an affine surface. If $(x^1,x^2)$ are local coordinates on $M$, let $(y_1,y_2)$ be the associated
dual coordinates on the cotangent bundle where a 1-form is expressed as $\omega=y_1dx^1+y_2dx^2$. 
Let $T=T^r{}_i\partial_{x^r}\otimes dx^i$
be a tensor field
of type~$(1,1)$ on $M$ (i.e. an endomorphism of the tangent bundle $TM$)
and let $\Phi_{ij}$ be a symmetric 2-tensor on $M$. The associated modified Riemannian extension
\begin{equation}\label{E1.a}
g_{T,\nabla,\Phi}= 2\, dx^i\circ dy_i\!+\!\left\{  \frac{1}{2} y_ry_s (T^r{}_i T^s{}_j + T^r{}_j T^s{}_i)
 \! -\!2y_k\Gamma_{ij}{}^k\!+\! \Phi_{ij}\right\} dx^i\circ dx^j
\end{equation}
is invariantly defined and independent of the particular system of local coordinates (see, for example, the discussion
in \cite{CGGV09}). Let
$$
\mathcal{S}_T:=\{P\in M:T(P)=\lambda(P)\Id\}\text{ and }\mathcal{O}_T:=M-S_T\,.
$$
The space $\mathcal{S}_T$ is the set of points where $T$ is a scalar multiple of the identity;
$\mathcal{O}_T$ is the complementary space.
We will establish the following result in Section~\ref{S2}.
\begin{theorem} \label{T1.1} 
Let $\mathcal{M}=(M,\nabla)$ be an affine surface, let $T$ be a tensor of type $(1,1)$, and let $\Phi$ be a symmetric $2$-tensor. Let
$\mathcal{N}:=(T^*M,g_{T,\nabla,\Phi})$.
\begin{enumerate}
\item If $M=\mathcal{S}_T$, then $\mathcal{N}$ is half conformally flat 
and hence Bach flat.
\item $\mathcal{O}_T$ is an open subset of $M$. 
If $P\in\mathcal{O}_T$ and if $\mathfrak{B}(P)=0$, then $T(P)^2=0$.
\item If $T$ is nilpotent on $M$ and if $T(P)\ne0$, then there exist local coordinates near $P$ so that
$T=\partial_{x^1}\otimes dx^2$. The
following assertions are equivalent in such a coordinate system.
\begin{enumerate}
\item $\mathcal{N}$ is Bach flat.
\item $\Gamma_{11}{}^2=0$ and
$(\Gamma_{11}{}^1)^2-\Gamma_{11}{}^1\Gamma_{12}{}^2+\partial_{x^1}(\Gamma_{11}{}^1-\Gamma_{12}{}^2)=0$.
\end{enumerate}\end{enumerate}
\end{theorem}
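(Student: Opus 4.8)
The plan is to prove all three parts by direct curvature computation, exploiting that every modified Riemannian extension $g_{T,\nabla,\Phi}$ is a Walker metric: the null distribution spanned by $\partial_{y_1},\partial_{y_2}$ is parallel, so the curvature operator, Ricci tensor, Weyl tensor and finally the Bach tensor are all accessible through the formulas for such metrics (as worked out in \cite{CGGV09}). Note that the contribution of $T$ to the metric coefficients in \eqref{E1.a} is quadratic in the fibre coordinates $y_r$, while that of $\nabla$ is linear and that of $\Phi$ is constant, so all these tensors are polynomial in $y$ of controlled degree. For (1) I would substitute $T=\lambda\,\Id$, so that $T^r{}_iT^s{}_j\,y_ry_s=\lambda^2y_iy_j$, and verify that the resulting metric is half conformally flat, i.e. that one of the two halves $W^\pm$ of its Weyl tensor vanishes identically; alternatively, one recognises it inside the classification of (anti-)self-dual Walker metrics. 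The asserted Bach flatness then follows from the standard fact that a half conformally flat four-manifold is Bach flat (the Bach tensor in dimension four being expressible purely in terms of either half of the Weyl curvature).

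For (2), the openness of $\mathcal{O}_T$ is immediate: $P\in\mathcal{S}_T$ exactly when the trace-free endomorphism $T-\tfrac12(\tr T)\,\Id$ vanishes at $P$, which is a closed condition, so $\mathcal{O}_T=M-\mathcal{S}_T$ is open. For the substantive statement I would compute $\mathfrak{B}$ for a general $g_{T,\nabla,\Phi}$ and evaluate it at $P$. The idea is that, because $T$ enters the metric quadratically in $y$, the highest-weight part in the fibre variables of suitable components of $\mathfrak{B}(P)$ is a polynomial in the entries of the $2\times2$ matrix $T(P)$ alone, the Christoffel symbols and $\Phi$ contributing only to lower-order terms; one then checks that the vanishing of these polynomials, \emph{together with the hypothesis that $T(P)$ is not a multiple of the identity}, forces $\tr T(P)=0$ and $\det T(P)=0$, whence $T(P)^2=0$ by the Cayley--Hamilton identity $T^2=(\tr T)\,T-(\det T)\,\Id$ for $2\times2$ matrices. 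Isolating exactly which components of the Bach tensor carry this algebraic information, and carrying out the reduction, is the main obstacle in (2).

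For (3), nilpotency of $T$ together with $\dim M=2$ forces $T^2=0$ identically, so near a point $P$ with $T(P)\ne0$ the endomorphism $T$ has rank one and $\operatorname{im}T\subseteq\ker T$; since both are one-dimensional, $\operatorname{im}T=\ker T$ is a smooth line field, and straightening it (choosing $x^1$ so that $\operatorname{im}T=\operatorname{span}\{\partial_{x^1}\}$, which gives $T=\partial_{x^1}\otimes h\,dx^2$ with $h(P)\ne0$) and then rescaling $x^1$ to absorb $h$ puts $T$ in the normal form $T=\partial_{x^1}\otimes dx^2$. In these coordinates \eqref{E1.a} becomes
\[
g=2\,dx^i\circ dy_i+\big\{y_1^2\,\delta_{i2}\delta_{j2}-2y_k\Gamma_{ij}{}^k+\Phi_{ij}\big\}\,dx^i\circ dx^j ,
\]
and one computes the Bach tensor explicitly, conveniently with a computer algebra system. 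The resulting components must reduce --- after the cancellation of every term containing $\Phi$ or the Christoffel symbols $\Gamma_{12}{}^1,\Gamma_{22}{}^1,\Gamma_{22}{}^2$ --- to multiples of $\Gamma_{11}{}^2$ and of $(\Gamma_{11}{}^1)^2-\Gamma_{11}{}^1\Gamma_{12}{}^2+\partial_{x^1}(\Gamma_{11}{}^1-\Gamma_{12}{}^2)$, which yields the equivalence of (3a) and (3b). As in (2), the main difficulty is the bookkeeping: verifying that the numerous a priori possible terms cancel so as to leave precisely these two expressions, and in particular that the condition is independent of $\Phi$.
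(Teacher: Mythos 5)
Your overall strategy is the same as the paper's: compute $\mathfrak{B}$ directly for the Walker metric $g_{T,\nabla,\Phi}$, isolate the part of $\mathfrak{B}_{k\ell}$ quadratic in the fibre variables for assertion~(2), and normalize $T$ to $\partial_{x^1}\otimes dx^2$ for assertion~(3). Two of your steps are in fact slightly cleaner than the paper's: you get openness of $\mathcal{O}_T$ by observing that $\mathcal{S}_T$ is the zero set of the trace-free part of $T$ (the paper uses a perturbation argument), and you obtain the normal form by straightening the line field $\operatorname{im}T=\ker T$ and reparametrizing $x^1$ by $u^1=\int h^{-1}$, whereas the paper builds a commuting frame $\{X_1,X_2\}$ by solving an ODE. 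Your use of Cayley--Hamilton to pass from $\tr T(P)=\det T(P)=0$ to $T(P)^2=0$ is also fine.

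The genuine gap is in assertion~(2), at exactly the point you flag as ``the main obstacle'': you assert, but do not verify, that the vanishing of the $y$-quadratic part of $\mathfrak{B}(P)$ together with $T(P)$ not being scalar forces $\tr T(P)=\det T(P)=0$. This is where all the work in the paper's proof lives. The coefficient $\Theta_{ijk\ell}$ of $y_iy_j$ in $\mathfrak{B}_{k\ell}$ is a specific degree-$6$ polynomial in the entries of $T(P)$, and one must check that its zero locus contains no non-scalar, non-nilpotent matrices; a priori nothing prevents, say, a diagonalizable $T(P)$ with two distinct nonzero eigenvalues from satisfying the system. The paper does this by putting $T(P)$ in upper triangular form and exhibiting
$\Theta_{1111}=\frac16\lambda_1^2(\lambda_1-\lambda_2)^2(\lambda_1^2+\lambda_1\lambda_2-5\lambda_2^2)$ and its mirror $\Theta_{2222}$, whose simultaneous vanishing with $\lambda_1\ne\lambda_2$ leads to a contradiction, and then, in the repeated-eigenvalue case, the Jordan-form-sensitive component $\Theta_{1122}=-3\varepsilon^2\lambda_1^4$, which uses $\varepsilon\ne0$ (i.e.\ non-scalarity) to force $\lambda_1=0$. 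Without producing these (or equivalent) explicit components and running the elimination, assertion~(2) is not proved. The analogous computation in (3) --- that $\mathfrak{B}_{11}=0$ yields $\Gamma_{11}{}^2=0$ and $\mathfrak{B}_{22}=0$ then yields the second relation, plus the converse --- is likewise deferred in your write-up, though there the paper itself only reports the outcome of a direct computation, so you are on equal footing.
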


\begin{remark}\label{R1.2}
\rm 
We note that the auxiliary tensor $\Phi$ plays no role in the analysis. 
If $\phi(x^1,x^2)$ is a smooth function, set
$\phi^{(1,0)}=\partial_{x^1}\phi$, $\phi^{(2,0)}=\partial_{x^1x^1}\phi$, and so forth. 
We can express the conditions
of Assertion~(3b) in the form
$$
\Gamma_{11}{}^2=0,\quad \Gamma_{11}{}^1=-\phi^{(1,0)},\quad
\Gamma_{12}{}^2=\Gamma_{11}{}^1+c\cdot e^\phi\
$$
for smooth functions $c=c(x^2)$ and $\phi=\phi(x^1,x^2)$. Assertion~(1) generalizes a result of \cite{CGGV}
which considered Bach flat manifolds in the context of parallel tensor fields~$T$.
\end{remark}

If $T$ is a scalar multiple of the identity, then $\mathcal{N}$ is half conformally flat. We focus, therefore,
on the case $T$ is nilpotent henceforth and assume, unless otherwise noted, that $M=\mathcal{O}_T$. 
We work locally. Fix $P\in M$
and a local system of coordinates defined near $P$. We wish to find $0\ne T$ nilpotent so that $\mathcal{N}$ is Bach flat. Since
either $T^1{}_2(P)\ne0$ or $T^2{}_1(P)\ne0$, we assume for the sake of definiteness that $T^1{}_2(P)\ne0$. This implies that we may expand
$T$ near $P$ in the form
\begin{equation}\label{E1.b}
T=\alpha(x^1,x^2)\left(\begin{array}{cc}\xi(x^1,x^2)&1\\-\xi^2(x^1,x^2)&-\xi(x^1,x^2)\end{array}\right).
\end{equation}

\begin{definition}\label{D1.3}
We introduce the following operators:
\medskip

\par\noindent$\quad
\mathcal{P}_1(\xi):=-\xi ^{(1,0)}+\xi\,\xi ^{(0,1)} +{\Chr221} \xi ^3-(2 {\Chr121} -{\Chr222}) \xi ^2
+({\Chr111}  -2 {\Chr122}) \xi
+{\Chr112}\,, $
\medbreak\noindent$\quad
\mathcal{P}_2(\xi,\alpha):=\alpha \alpha^{(2,0)}+ \xi^2 \alpha \alpha^{(0,2)}-2 \xi \alpha  \alpha^{(1,1)}
+(\alpha^{(1,0)})^2
+\xi^2 (\alpha^{(0,1)})^2
-2 \xi  \alpha^{(1,0)} \alpha^{(0,1)}
$
\smallbreak\noindent$\qquad
-\alpha \alpha^{(1,0)} \left(2 \xi^{(0,1)}-5 \Chr221 \xi^2+2(4 \Chr121- \Chr222) \xi-3 \Chr111+2 \Chr122\right)
$
\smallbreak\noindent$\qquad
+\alpha \alpha^{(0,1)} 
\left(
2 \xi \xi^{(0,1)}
-6 \Chr221 \xi^3+(10 \Chr121-3 \Chr222) \xi^2- 4(\Chr111- \Chr122)\xi-\Chr112
\right)
$
\smallbreak\noindent$\qquad
+6   \xi^4 \alpha^2 (\Chr221)^2
-2 \xi^3 \alpha^2\left(
(\Chr221)^{(0,1)}+9 \Chr121 \Chr221
-3 \Chr221 \Chr222 
\right)  
$

\smallskip

\par\noindent$\qquad
-  \xi^2 \alpha^2
\left(
4 \Chr221 \xi^{(0,1)}
-3 (\Chr121)^{(0,1)}
-2 (\Chr221)^{(1,0)}+(\Chr222)^{(0,1)}
\right.
$
\smallbreak\noindent$\qquad\left.
\phantom{-  \xi^2 \alpha^2(}
-12 (\Chr121)^2
-(\Chr222)^2
-7 \Chr111 \Chr221
+7  \Chr121 \Chr222
+9 \Chr122 \Chr221 
\right)
$
\smallbreak\noindent$\qquad
+\xi  \alpha^2
\left(
2(3 \Chr121-\Chr222) \xi^{(0,1)}-(\Chr111)^{(0,1)}
-3 (\Chr121)^{(1,0)}
+(\Chr122)^{(0,1)}
\right.
$
\smallbreak\noindent$\qquad
\phantom{+\xi  \alpha^2(}
\left.
+(\Chr222)^{(1,0)}-2 (\Chr111-\Chr122)(4 \Chr121-\Chr222) +4 \Chr112 \Chr221
\right)
$
\smallbreak\noindent$\qquad  
- \alpha^2\left(2 (\Chr111-\Chr122) \xi^{(0,1)}-(\Chr111)^{(1,0)}+(\Chr122)^{(1,0)}\right.
$
\smallbreak\noindent$\qquad
\phantom{- \alpha^2(}
\left.-(\Chr111)^2+\Chr111 \Chr122
+3 \Chr112 \Chr121-\Chr112 \Chr222
\right) \,.
$
\end{definition}

Theorem~\ref{T1.1} permits us to construct connections so the Riemannian extension is Bach flat once the
nilpotent endomorphism is given. We will establish the following result in Section~\ref{S3} which focuses on 
the reverse problem of constructing nilpotent endomorphisms so the Riemannian extension is Bach flat once
the connection is given; this is, in a certain sense, a more natural question.

\begin{theorem}\label{T1.4}
Let $(M,\nabla)$ be an affine surface. Let $T$ have the form given in Equation~(\ref{E1.b}) and let $\Phi$ be arbitrary.
The modified Riemannian extension $(T^*M,g_{T,\nabla,\Phi})$ of Equation~(\ref{E1.a}) is
Bach flat if and only if $\alpha$ and $\xi$ are solutions to the
partial differential equations $\mathcal{P}_1(\xi)=0$ and $\mathcal{P}_2(\xi,\alpha)=0$.
\end{theorem}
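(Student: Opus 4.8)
The plan is to deduce Theorem~\ref{T1.4} from Theorem~\ref{T1.1}(3) by passing to adapted coordinates in which $T$ takes the normal form $\partial_{x^1}\otimes dx^2$, and then translating the two scalar conditions of Assertion~(3b) back into the original data $(\nabla,\alpha,\xi)$. As a preliminary observation, the matrix in Equation~(\ref{E1.b}) has vanishing trace and determinant, so $T$ is indeed nilpotent; moreover one has the factorization $T=v\otimes\omega$ with $v:=\partial_{x^1}-\xi\,\partial_{x^2}$ and $\omega:=\alpha\,(\xi\,dx^1+dx^2)$, where $\omega(v)=0$ and $\operatorname{span}(v)=\ker T=\operatorname{Image}T$. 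On the neighborhood of $P$ where we work we have $\alpha\neq0$, which is the running assumption $T^1{}_2(P)\neq0$, so $T(P)\neq 0$ and Theorem~\ref{T1.1}(3) applies.

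\emph{Adapted coordinates and transformation law.} First I would produce local coordinates $(\bar x^1,\bar x^2)$ with $T=\partial_{\bar x^1}\otimes d\bar x^2$: here $\bar x^2$ is a first integral of the kernel line field, i.e. a local solution of $v(\bar x^2)=0$, while $\bar x^1$ is then fixed by requiring $\partial_{\bar x^1}=\mu\,v$ together with $T\partial_{\bar x^2}=\partial_{\bar x^1}$, which forces $\mu=\omega(\partial_{\bar x^2})$ and hence brings $\alpha$ into the change of variables. All of this is solvable locally by the method of characteristics, uniquely up to the obvious reparametrization freedom, and the resulting change of variables depends on $\alpha,\xi$ and their first derivatives. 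I would then record the standard transformation law $\Gamma\mapsto\bar\Gamma$ of the Christoffel symbols; its inhomogeneous term brings in the second derivatives of the change of variables, hence second derivatives of $\alpha$ and $\xi$.

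\emph{Identification of the two conditions.} Substituting $\bar\Gamma$ into the first condition $\bar\Gamma_{11}{}^2=0$ of Assertion~(3b) should yield exactly $\mathcal{P}_1(\xi)=0$: note that the leading part $-\xi^{(1,0)}+\xi\,\xi^{(0,1)}$ of $\mathcal{P}_1$ is $-v(\xi)$, and a short direct computation shows that $\mathcal{P}_1(\xi)=0$ is precisely the condition that the line field $\operatorname{span}(v)$ is autoparallel for $\nabla$, i.e. $\nabla_v v\in\operatorname{span}(v)$; this is manifestly coordinate independent and equivalent to $\bar\Gamma_{11}{}^2=0$. Substituting $\bar\Gamma$ into the second condition $(\bar\Gamma_{11}{}^1)^2-\bar\Gamma_{11}{}^1\bar\Gamma_{12}{}^2+\partial_{\bar x^1}(\bar\Gamma_{11}{}^1-\bar\Gamma_{12}{}^2)=0$, and using $\mathcal{P}_1(\xi)=0$ to remove the derivatives of $\xi$ transverse to $v$, should collapse it to $\mathcal{P}_2(\xi,\alpha)=0$; the quadratic dependence on $\alpha$ visible in Definition~\ref{D1.3} comes from squaring $\bar\Gamma_{11}{}^1$, which carries one derivative of the $\alpha$-dependent change of variables. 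Throughout, the tensor $\Phi$ never appears, since by Remark~\ref{R1.2} and Theorem~\ref{T1.1} it does not enter the Bach tensor, so it may be left arbitrary.

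The main obstacle is the last step: the substitution and simplification are lengthy, and one must check that the reparametrization freedom in $(\bar x^1,\bar x^2)$ does not affect the two resulting equations. This is guaranteed because, by Theorem~\ref{T1.1}, the conditions of Assertion~(3b) are together equivalent to the intrinsic property of Bach flatness; hence their translation is well defined independently of the choices made. In practice I would carry out the algebra with a computer algebra system, and as an independent check I would compute directly the components of the Bach tensor of $g_{T,\nabla,\Phi}$ with $T$ as in Equation~(\ref{E1.b}) and a general connection, verifying that the independent ones are proportional to $\mathcal{P}_1(\xi)$ and $\mathcal{P}_2(\xi,\alpha)$.
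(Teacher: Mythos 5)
Your proposal is sound, but it follows a genuinely different route from the paper. The paper proves Theorem~\ref{T1.4} by brute force in the original coordinates: it computes the Bach tensor of $g_{T,\nabla,\Phi}$ with $T$ as in Equation~(\ref{E1.b}) and a completely general connection, observes that only $\mathfrak{B}_{11},\mathfrak{B}_{12},\mathfrak{B}_{22}$ survive, and extracts the two equations from the linear combinations $\mathfrak{Q}_1=\mathfrak{B}_{11}-\xi\mathfrak{B}_{12}$, $\mathfrak{Q}_2=\mathfrak{B}_{11}-\xi^2\mathfrak{B}_{22}$, $\mathfrak{Q}_3=2\mathfrak{Q}_1-\mathfrak{Q}_2$; the key identity is $\mathfrak{Q}_3=-4\alpha^2(\mathcal{P}_1)^2$ (note the square: your closing remark that the independent components are ``proportional to $\mathcal{P}_1$'' is slightly off for this combination), after which $\mathcal{P}_1=0$ and its differentiated consequences are substituted to reduce the surviving component to $\mathfrak{B}_{11}=-4\xi^2\mathcal{P}_2$. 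You instead reduce to the normal form of Theorem~\ref{T1.1}~(3) by a coordinate change adapted to $\ker T=\operatorname{span}(v)$ and translate the conditions of (3b) back. Your identification of $\mathcal{P}_1(\xi)=0$ with the autoparallel condition $\nabla_vv\in\operatorname{span}(v)$ is correct: a direct check gives $(\nabla_vv)^2+\xi\,(\nabla_vv)^1=\mathcal{P}_1(\xi)$ for $v=\partial_{x^1}-\xi\,\partial_{x^2}$, and this is equivalent to $\bar\Gamma_{11}{}^2=0$ because $\partial_{\bar x^1}$ spans $\ker T$; this supplies an intrinsic interpretation of $\mathcal{P}_1$ that the paper never states, and your treatment of the residual coordinate freedom (the conjunction of the two conditions is intrinsic, and the first is separately intrinsic) is adequate. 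What your route buys is conceptual clarity and reuse of Theorem~\ref{T1.1}~(3), whose coordinate construction you can simply cite from Section~\ref{S2.2} rather than rebuild; what it costs is that the identification of the translated second condition with $\mathcal{P}_2=0$ modulo $\mathcal{P}_1=0$ remains a long unverified computation --- exactly as long as the paper's own unshown ``direct computation'', so neither argument is more complete at the decisive step, and both are in practice computer-algebra assisted there.
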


Suppose $\mathcal{M}$ is real analytic. 
The operator $\mathcal{P}_1(\xi)$ of Definition~\ref{D1.3} takes the form:
$$
\mathcal{P}_1(\xi)=-\xi^{(1,0)}+\xi\xi^{(0,1)}+f(\xi,\GammaD)\,.
$$
Given a real analytic function $\xi_0(x^2)$, the Cauchy-Kovalevski Theorem shows that there is
a unique local solution to the equation $\mathcal{P}_1(\xi)=0$ with $\xi(0,x^2)=\xi_0(x^2)$. 
Once $\xi$ is determined, the operator  $\mathcal{P}_2(\xi,\alpha)$ of Definition~\ref{D1.3} takes the form
\begin{eqnarray*}
\mathcal{P}_2(\xi,\alpha)&=&\alpha\alpha^{(2,0)} { -2\xi\alpha}\alpha^{(1,1)}
+{ \xi^2\alpha}\alpha^{(0,2)}+F(\alpha,d\alpha;\GammaD,{ d\GammaD};\xi,{ d\xi})\,.
\end{eqnarray*}
Given real analytic functions $\alpha_0(x^2)$ and $\alpha_1(x^2)$,
there exists a unique local solution to the equation
$\mathcal{P}_2(\xi,\alpha)=0$ with $\alpha(0,x^2)=\alpha_0(x^2)$
and $\alpha^{(1,0)}(0,x^2)=\alpha_1(x^2)$.
Thus given $\affcon$, there are many nilpotent $T$ so that $\mathcal{N}$ is Bach flat in this setting; the auxiliary
tensor $\Phi$ plays no role in the analysis. 
In Section~\ref{S4}, we exhibit some specific examples of Bach flat manifolds.

Let $0\neq T=T^j{}_i(x^1,x^2)$ be  a nilpotent tensor field of type $(1,1)$ as in Equation~(\ref{E1.b}). 
A straightforward calculation shows that
\[
W^-(E_1^-,E_1^-)=\frac{1}{2} \alpha(x^1,x^2)^2 (\xi(x^1, x^2)^2+1)^2,
\quad
W^+(E_1^+,E_2^+)=-2 \rho_a^\affcon(\partial_{x^1},\partial_{x^2})\,.
\]
Therefore, $W^-$ is always non-null  and the non-symmetry of $\rho^\affcon$ guarantees that $(T^*M,g_{T,\affcon,\Phi})$ is not half conformally flat.

In Section~\ref{S5}, we explore the geometry of the Riemannian extension defined by a nilpotent tensor. 
We say a pseudo-Riemannian manifold $\mathcal{N}=(N,g)$ is VSI (vanishing scalar invariants) if all the scalar Weyl invariants
(i.e. invariants formed by a complete contraction of indices in the Riemann curvature tensor $R_{ijk\ell}$
and its covariant derivatives) vanish 
(see \cite{CHMMB14, H} and references therein for more information and examples of VSI manifolds).
Let $\tau$ be the scalar curvature.

\begin{theorem}\label{T1.5}
Let $\mathcal{N}=(T^*M,g_{T,\affcon,\phi})$ with $T\neq 0$. The following assertions are equivalent:
\par\quad{\rm(1)} $\mathcal{N}$ is VSI.\quad
{\rm(2)} $\|R\|^2=\|\rho\|^2=0$.\quad{\rm(3)} $\|\rho\|^2=\tau=0$.
\quad{\rm(4)} $T$ is nilpotent.
\end{theorem}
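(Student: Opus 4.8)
The plan is to establish the implications $(1)\Rightarrow(2)$, $(1)\Rightarrow(3)$, $(2)\Rightarrow(4)$, $(3)\Rightarrow(4)$ and $(4)\Rightarrow(1)$; together these yield all four equivalences. The first two are immediate: $\|R\|^2$, $\|\rho\|^2$ and $\tau$ are complete contractions of the curvature tensor, hence scalar Weyl invariants, and so vanish on any VSI manifold. The substance is therefore $(4)\Rightarrow(1)$ together with the converse directions $(2)\Rightarrow(4)$ and $(3)\Rightarrow(4)$, i.e.\ that nilpotency of $T$ is both sufficient and necessary for the stated curvature conditions.

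For $(4)\Rightarrow(1)$: if $T$ is nilpotent then $\mathcal{S}_T=\{T=0\}$, and on $\mathcal{O}_T$ we put $T$ in the normal form $T=\partial_{x^1}\otimes dx^2$ of Theorem~\ref{T1.1}(3). In these coordinates $g_{T,\affcon,\phi}$ is an explicit Walker metric whose vertical distribution $\ker d\pi$ (with $\pi\colon T^*M\to M$) is a parallel, totally null $2$-plane field; I would exhibit a null frame adapted to $\ker d\pi$ and verify that $R$ and all its covariant derivatives $\affcon^{(k)}R$ have only negative boost weight relative to it, which by the standard boost-weight characterisation of VSI pseudo-Riemannian manifolds (see \cite{CHMMB14, H}) forces all scalar Weyl invariants to vanish on $\mathcal{O}_T$. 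On $\mathcal{S}_T$ the metric is the classical Patterson--Walker extension, which is VSI. Since the VSI condition is closed and $\mathcal{O}_T$ together with the interior of $\mathcal{S}_T$ is dense in $M$, $\mathcal{N}$ is VSI.

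The crux of the remaining two implications is a curvature computation for $g_{T,\affcon,\phi}$, carried out in the coordinates $(x^1,x^2,y_1,y_2)$ using that the $dx\otimes dx$-block of $g^{-1}$ vanishes and that the vertical distribution is parallel and totally null (so $R$ vanishes on pairs of vertical vectors). The outcomes I expect to record are: the Ricci operator $\mathrm{Ric}=g^{-1}\rho$ is block triangular with respect to the horizontal/vertical splitting of $T(T^*M)$, with the single nontrivial off-diagonal block a nonzero constant times $N$, where $N:=T^2+\tr(T)\,T$ is an endomorphism of $TM$ that is purely algebraic in $T$ (independent of $\affcon$ and $\phi$); consequently $\tau=\tr(\mathrm{Ric})$ and $\|\rho\|^2=\tr(\mathrm{Ric}^2)$ reduce to nonzero multiples of $\tr(N)$ and $\tr(N^2)$ respectively; and $\|R\|^2$ is polynomial in the fibre coordinates, with the vanishing of its leading part forcing an additional algebraic condition on $T$, say $\det(N)=0$ (again $\affcon$- and $\phi$-independent). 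Pinning down this block-triangular structure — in particular checking that the $\affcon$- and $\phi$-dependence does not interfere — is the step I expect to be the main obstacle; it is long but mechanical once the Christoffel symbols are written in block form.

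Granting this, $(3)\Rightarrow(4)$ is immediate. Since $N$ is a $2\times2$ matrix at each point, the hypotheses $\tau\equiv0$ and $\|\rho\|^2\equiv0$ give $\tr(N)=0$ and $\tr(N^2)=0$, whence $\det(N)=\tfrac{1}{2}\bigl(\tr(N)^2-\tr(N^2)\bigr)=0$. Writing $t=\tr(T)$, $d=\det(T)$ and using the Cayley--Hamilton identity $T^2=tT-d\,\Id$, we have $N=2tT-d\,\Id$, so $\tr(N)=2t^2-2d$ and $\det(N)=d\,(2t^2+d)$; these vanish, forcing $d=t^2$ and then $3t^4=0$, hence $t=d=0$, so $T^2=tT-d\,\Id=0$ and $T$ is nilpotent. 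For $(2)\Rightarrow(4)$ one argues the same way: $\|\rho\|^2\equiv0$ gives $\tr(N^2)=0$, the additional condition extracted from $\|R\|^2\equiv0$ gives $\det(N)=0$, hence $\tr(N)^2=\tr(N^2)+2\det(N)=0$, so again $\tr(N)=\det(N)=0$ and $T^2=0$. This closes the cycle and proves the theorem.
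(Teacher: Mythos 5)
Your reduction to the algebraic invariants of $N:=T^2+\operatorname{tr}(T)\,T$ is a good idea and two of your three substantive implications are sound: the identities $\tau=\operatorname{tr}(N)$ and $\|\rho\|^2=\frac12\operatorname{tr}(N^2)$ agree with the paper's eigenvalue formulas $\tau=2(\lambda_1^2+\lambda_1\lambda_2+\lambda_2^2)$ and $\|\rho\|^2=2\lambda_1^4+2\lambda_1^3\lambda_2+\lambda_1^2\lambda_2^2+2\lambda_1\lambda_2^3+2\lambda_2^4$, and your $(3)\Rightarrow(4)$ is the paper's argument in disguise, since $\|\rho\|^2-\frac12\tau^2=-\det N=-\lambda_1\lambda_2(2\lambda_1+\lambda_2)(\lambda_1+2\lambda_2)$. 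The genuine gap is in $(2)\Rightarrow(4)$: the step ``$\|R\|^2\equiv0$ gives $\det(N)=0$'' is false. One has $\|R\|^2=4(\lambda_1^4+\lambda_1^2\lambda_2^2+\lambda_2^4)=4(\lambda_1^2+\lambda_1\lambda_2+\lambda_2^2)(\lambda_1^2-\lambda_1\lambda_2+\lambda_2^2)$ while $\det N=\lambda_1\lambda_2(2\lambda_1+\lambda_2)(\lambda_1+2\lambda_2)$; for the rotation $T=r\bigl(\begin{smallmatrix}\cos\theta&\sin\theta\\-\sin\theta&\cos\theta\end{smallmatrix}\bigr)$ with $\theta=\pi/3$ (the paper's Example~\ref{E5.1}) one gets $\|R\|^2=0$ but $\det N=3r^4\neq0$, so no $\affcon$-independent identity lets you extract $\det N=0$ from $\|R\|^2=0$ and then run your trace computation. (Also note that $\|R\|^2$ does not depend on the fibre coordinates at all, so there is no ``leading part'' to isolate.) The paper closes this case by splitting on the eigenvalue type: if $\lambda_1,\lambda_2$ are real, $\|R\|^2=0$ alone forces $\lambda_1=\lambda_2=0$; if they are complex conjugates $e^{\pm i\theta}$ (after scaling), then $\|\rho\|^2-\frac12\|R\|^2=0$ gives $\cos(2\theta)=\frac14$ while $\|R\|^2=0$ gives $\cos(4\theta)=-\frac12$, incompatible with $\cos(4\theta)=2\cos^2(2\theta)-1$. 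You need some such replacement; the complex case cannot be avoided, since Example~\ref{E5.1}(2) shows $\|\rho\|^2=0$ alone does not force nilpotency.

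A second, softer issue is $(4)\Rightarrow(1)$. Deferring to \cite{CHMMB14,H} is legitimate (the paper does so before giving its own proof), but the verification you sketch --- that $R$ and all $\nabla^kR$ have ``only negative boost weight'' --- cannot succeed as literally stated: in the coordinates with $T=\partial_{x^1}\otimes dx^2$ one has $R_{2323}=-1$, a nonvanishing component of balanced (zero) weight, involving two base and two fibre indices. The paper's direct proof is organized around exactly this component: it introduces a weight $\mathfrak{x}=\mathfrak{o}+\mathfrak{d}$, shows every nonzero component of $\nabla^kR$ satisfies $\mathfrak{x}\le0$ with equality only for $\pm R_{2323}$, and then kills the surviving terms because $g^{23}=0$, so an index $2$ can never be contracted against an index $3$. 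If you cite the literature you must invoke the neutral-signature Walker--VSI criterion in the form actually proved there, not the naive all-weights-negative statement.
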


In Example~\ref{E5.1}, we will show that the conditions $\|R\|^2=\tau^2=0$ do not suffice
to show that $T$ is nilpotent nor does the condition $\|\rho\|^2=0$ suffice to show that $T$ is nilpotent.
In Section~\ref{S6}, we construct invariants of $\mathcal{N}$
which are not of Weyl type. Both invariants rely upon the fact that $\mathcal{N}$ is a Walker
manifold, i.e. that there exists a parallel totally isotropic 2-plane $\mathfrak{V}$. Examples
are presented; the auxiliary endomorphism $\Phi$ enters for the first time in the analysis.

\section{The proof of Theorem~\ref{T1.1}}\label{S2}

A direct computation shows that if $T=f\Id$ for $f\in C^\infty(M)$,
then $\mathcal{N}$ is half conformally flat \cite{CGGV09}, and thus $\mathfrak{B}=0$;
this establishes Assertion~(1) of Theorem~\ref{T1.1}.
In Sections \ref{S2.1} and \ref{S2.2}, we establish the second and third assertions, respectively,
of Theorem~\ref{T1.1}. Let $\mathfrak{B}=0$. By Theorem~\ref{T1.1}~(3),
we may decompose $M=\mathcal{S}_T\dot\cup\mathcal{O}_T$
as the disjoint union of the set of points where $T$ is a scalar multiple of the identity 
and the set of points where $T$ is nilpotent and has non-trivial
Jordan normal form. In the real analytic setting, if $\mathcal{O}_T$ is non-empty and if $M$
is connected, then $\mathcal{O}_T$ is dense in $M$ and
$T$ is always nilpotent.  In Section~\ref{S2.3}, we provide an example in the smooth category 
where this observation fails.

\subsection{The proof of Theorem~\ref{T1.1}~(2)}\label{S2.1}
Let $\Theta_{ijk\ell}$ be the coefficient of $y_iy_j$
in $\mathfrak{B}_{k\ell}$. A straightforward computation shows that $\Theta_{ijk\ell}$ 
is a polynomial which is homogeneous of degree 6 in the $T^u{}_v$ variables for
$k,\ell\in\{ 1,2\}$ and zero otherwise; 
the Christoffel symbols and their derivatives,
the auxiliary endomorphism $\Phi$ and its derivatives, and the derivatives of $T$ do not appear in these terms. 
Consequently, $\Theta=\{\Theta_{ijk\ell}\}$ is tensorial.
Assume that $\mathcal{N}$ is Bach flat. This implies $\Theta(T)=0$. We suppose $P\in\mathcal{O}_T$, i.e.
$T(P)$ is not a scalar multiple of the identity. Let $\{\lambda_1,\lambda_2\}$ be the
(possibly complex) eigenvalues of $T(P)$. We
can make a complex linear change of coordinates in the $\{x^1,x^2\}$ variables to put $T$ in
upper triangular form;
this induces a corresponding dual complex linear change of coordinates in the $\{y_1,y_2\}$ variables. This is,
of course, just Jordan normal form. Thus we
may assume that:
\begin{equation}\label{E2.aa}
T(P):=\left(\begin{array}{cc}\lambda_1&\varepsilon\\0&\lambda_2\end{array}\right)\,.
\end{equation}
\subsection*{Suppose $\lambda_1\ne\lambda_2$}
We compute:
\begin{eqnarray*}
&&\Theta_{1111}(T(P))=\textstyle\frac16\lambda_1^2(\lambda_1-\lambda_2)^2(\lambda_1^2+\lambda_1\lambda_2-5\lambda_2^2),\\
&&\Theta_{2222}(T(P))=\textstyle\frac16\lambda_2^2(\lambda_1-\lambda_2)^2(-5\lambda_1^2+\lambda_1\lambda_2+\lambda_2^2)\,.
\end{eqnarray*}
Note that the parameter $\varepsilon$ does not appear; these two terms are not sensitive to the
precise Jordan normal form but only to the eigenvalues. Since $\mathfrak{B}=0$ and since 
$\lambda_1-\lambda_2\ne0$, we obtain 
\begin{eqnarray}
&&\lambda_1^2(\lambda_1^2+\lambda_1\lambda_2-5\lambda_2^2)=0,\label{E2.a}\\
&&\lambda_2^2(-5\lambda_1^2+\lambda_1\lambda_2+\lambda_2^2)=0.\label{E2.b}
\end{eqnarray}
If $\lambda_1=0$, then $\lambda_2\ne0$ and Equation~(\ref{E2.b}) fails;
if $\lambda_2=0$, then $\lambda_1\ne0$ and Equation~(\ref{E2.a}) fails.
Thus $\lambda_1\ne0$ and $\lambda_2\ne0$ and we obtain
$$
\lambda_1^2+\lambda_1\lambda_2-5\lambda_2^2=0\text{ and }
-5\lambda_1^2+\lambda_1\lambda_2+\lambda_2^2=0\,.
$$
Subtracting these two identities yields $6\lambda_1^2-6\lambda_2^2=0$. 
Since $\lambda_1\ne\lambda_2$, we have
$\lambda_1=-\lambda_2$ so $-5\lambda_1^2=0$
and again $\lambda_1=\lambda_2=0$ which is false.

\subsection*{Suppose $\lambda_1=\lambda_2$}
We obtain $\Theta_{1122}(T)=-3\varepsilon^2\lambda_1^4$; this term is sensitive to the Jordan normal form.
Since $T(P)$ is not a scalar multiple of the identity, $\varepsilon\ne0$.
Thus $\lambda_1=0$ and $T(P)$ is nilpotent. If we perturb an endomorphism which is not a scalar multiple of the
identity, we obtain a similar endomorphism. This shows that $\mathcal{O}_T$ is open and completes the proof of
the second assertion of Theorem~\ref{T1.1}.\qed 

\subsection{The proof of Theorem~\ref{T1.1}~(3)}\label{S2.2}
Let $T$ be a nilpotent tensor of Type~(1,1) on a surface $M$. Assume $T(P)\ne0$.
Let $Z_2$ be a non-zero vector field which is defined near $P$
so that $TZ_2\ne0$. Then $Z_1:=TZ_2$ spans $\ker(T)$ and $\{Z_1,Z_2\}$ is a local frame for $TM$. 
If $\{Z^1,Z^2\}$ is the dual frame for $T^*M$, then
$T=Z_1\otimes Z^2$. Choose local coordinates $\{z^1,z^2\}$ so
that $Z_1=\partial_{z^1}$. Then $T\partial_{z^2}=f\partial_{z^1}$ for some non-zero function $f$. Let $X_1=f\partial_{z^1}$ 
and $X_2=g\partial_{z^1}+\partial_{z^2}$ where $g$ remains to be determined.
Then $TX_2=X_1$. We have
$[X_1,X_2]=(f\partial_{z^1}g-g\partial_{z^1}f-\partial_{z^2}f)\partial_{z^1}$. Solve the ODE
$\partial_{z^1}g=f^{-1}\{g\partial_{z^1}f+\partial_{z^2}f\}$ with initial condition $g(0,z^2)=0$.
This ensures $[X_1,X_2]=0$. Since $\{X_1,X_2\}$ are linearly independent, we can choose local coordinates
so $\partial_{x^1}=X_1$ and $\partial_{x^2}=X_2$. We then have $T=\partial_{x^1}\otimes dx^2$.

Suppose $\mathfrak{B}=0$.
Examining $\mathfrak{B}_{11}$ yields $\Gamma_{11}{}^2=0$. Examining $\mathfrak{B}_{22}$ yields the remaining
relation of Assertion~(3b). A direct computation shows that if the relations of Assertion~(3b) are satisfied,
then the Riemannian extension is Bach flat.\hfill\qed

\subsection{The relation between $\mathcal{S}_T$ and $\mathcal{O}_T$}\label{S2.3}
Let $M=\mathbb{R}^2$, 
let $\alpha(x^2)$ be a smooth real valued function which vanishes to infinite order
at $x^2=0$ and which is positive for $x^2\ne0$.\
Impose the conditions of Theorem~\ref{T1.1}~(3b) and assume that
$\Gamma_{11}{}^2=0$ and
$(\Gamma_{11}{}^1)^2-\Gamma_{11}{}^1\Gamma_{12}{}^2+\partial_{x^1}(\Gamma_{11}{}^1-\Gamma_{12}{}^2)=0$.
 Let
$$
T(x^1,x^2)=\left\{\begin{array}{lll}
\left(\begin{array}{cc}\alpha(x^2)&0\\0&\alpha(x^2)\end{array}\right)&\text{if}&x^2\le0\\[.2in]
\left(\begin{array}{cc}\ \ \ 0\ \ \ &\alpha(x^2)\\0&0\end{array}\right)&\text{if}&x^2\ge0.
\end{array}\right\}\,.
$$
One may then compute that $\mathfrak{B}=0$ so this yields a Bach flat manifold where the
Jordan normal form of $T$ changes at $x^2=0$. Furthermore, if we only assume that $\alpha$ is $C^k$
for $k\ge2$, we still obtain a solution; thus there is no hypo-ellipticity present when considering the
solutions to the equations $\mathfrak{B}=0$.

\section{The proof of Theorem~\ref{T1.4}}\label{S3}
We suppose $T$ is a nilpotent tensor field of type~$(1,1)$. 
Then $\operatorname{Trace}(T)=0$ and $\det(T)=0$. If
we  assume that $T^1{}_2(P)\ne0$, then $T$ has the form given in Equation~(\ref{E1.b}). 
A direct computation shows $\mathfrak{B}(\partial_{x^k},\partial_{y_j})=0$ and
 $\mathfrak{B}(\partial_{y_i},\partial_{y_j})=0$, and thus only $\mathfrak{B}_{11}$, $\mathfrak{B}_{12}$, and $\mathfrak{B}_{22}$, where $\mathfrak{B}_{ij}=\mathfrak{B}(\partial_{x^i},\partial_{x^j})$, are relevant. We observe that
$$\begin{array}{ll}
\text{Coefficient}[\mathfrak{B}_{11},\alpha^{(2,0)}]=-4\alpha\xi^2,&
\text{Coefficient}[\mathfrak{B}_{12},\alpha^{(2,0)}]=-4\alpha\xi,\\[0.05in]
\text{Coefficient}[\mathfrak{B}_{22},\alpha^{(2,0)}]=-4\alpha\,.
\end{array}$$
We therefore define $\mathfrak{Q}_1:=\mathfrak{B}_{11}-\mathfrak{B}_{12}\xi$, $\mathfrak{Q}_2:=\mathfrak{B}_{11}-\mathfrak{B}_{22}\xi^2$, and $\mathfrak{Q}_3:=2\mathfrak{Q}_1-\mathfrak{Q}_2$. 
We may then express $\mathfrak{Q}_3=-4\alpha^2(\mathcal{P}_1)^2$ and thus the vanishing of $\mathfrak{Q}_3$ is equivalent to the vanishing of
$\mathcal{P}_1$. We set $\mathcal{P}_1=0$ and express $\xi^{(1,0)}=F_{(1,0)}(\xi,\GammaD,\xi^{(0,1)})$. 
Differentiating this relation permits us to express
$\xi^{(1,1)}=F_{(1,1)}(\xi,\GammaD,d\GammaD,\xi^{(0,1)},\xi^{(0,2)})$ and 
$\xi^{(2,0)}=F_{(2,0)}(\xi,\GammaD,d\GammaD,\xi^{(0,1)},\xi^{(0,2)})$. Substituting these
relations then yields $\mathfrak{Q}_1=0$ and $\mathfrak{Q}_2=0$. Thus only $\mathfrak{B}_{11}$ plays a role. Substituting these relations permits us to express
$\mathfrak{B}_{11}=-4\xi^2\mathcal{P}_2$, $\mathfrak{B}_{12}=-4\xi\mathcal{P}_2$, and $\mathfrak{B}_{22}=-4\mathcal{P}_2$. The desired result now follows.
\qed

\section{Examples of Bach flat manifolds}\label{S4}
A pseudo-Riemannian manifold $(N^n,g)$ is said to be \emph{(locally) conformally Einstein} if every point $P\in N$
has an open neighborhood $\mathcal{U}$ and a positive smooth function $\varphi$ defined on 
$\mathcal{U}$ such that $(\mathcal{U},\bar{g} :=\varphi^{-2} g)$ is Einstein.
Brinkmann \cite{Brinkmann24} showed that a manifold is conformally Einstein if and only if
the equation
\begin{equation}\label{eq:Conformally Einstein}
(n-2)\operatorname{Hes}_\varphi +\varphi\,\rho- \frac{1}{n}\{(n-2)\Delta\varphi+\varphi\,\tau\}g=0
\end{equation}
has a positive solution. Although the conformally Einstein equation is quite simple,
integrating it is surprisingly difficult (see \cite{Kuhnel-Rademacher} and references therein for more information).
It was shown in \cite{GN, Kozameh-Newman-Tod85} that any four-dimensional conformally Einstein manifold satisfies  
\begin{equation}\label{eq:18-b}
\operatorname{div}_4 W(\cdot,\cdot,\cdot)-W(\cdot,\cdot,\cdot,\nabla \log\varphi)=0
\quad\text{and}\quad\bach=0\,.
\end{equation} 
We say that $(N,g)$ is \emph{weakly-generic} if the Weyl tensor is injective viewed as a map
from $TN$ to $\bigotimes^3 TN$. In this setting, the relations of Equation~(\ref{eq:18-b}) suffice to
imply $(N,g)$ is conformally Einstein. 
 
The existence of a null distribution $\mathfrak{V}$ on a four-dimensional 
pseudo-Riemannian manifold of neutral signature 
defines a natural orientation. This orientation is characterized by the fact that
if $\{u, v\}$ is any basis for $\mathfrak{V}$, then
the bivector $u\wedge v$ is self-dual (see \cite{rm08}).

Let $\pi:T^*M\rightarrow M$ be the natural projection. Then $\mathfrak{V}:=\ker\pi_*$
is a null distribution. We give $T^*M$ the orientation induced by $\mathfrak{V}$; 
self-duality and anti-self-duality are no longer interchangeable in this context.
For the remainder of this section, let $\mathcal{N}:=(T^*M,g_{T,\affcon,\Phi})$ be the
Riemannian extension..
We define a local orthonormal frame
of signature $(++--)$  for the tangent bundle by setting:
\medbreak\qquad
$e_1:=\partial_{x^{1}}\!+\!\frac{1}{2} (1-(g_{T,\affcon,\Phi})_{11}) \partial_{y_1}$,
\smallbreak\qquad
$e_2:=\partial_{x^{2}}\!- \!(g_{T,\affcon,\Phi})_{12} \partial_{y_1} \!+\! \frac{1}{2} (1-(g_{T,\affcon,\Phi})_{22}) \partial_{y_2}$,
\smallbreak\qquad
$e_3:=\partial_{x^{1}}\!-\!\frac{1}{2} (1+(g_{T,\affcon,\Phi})_{11}) \partial_{y_1}$,
\smallbreak\qquad
$e_4:=\partial_{x^{2}}\!-\!(g_{T,\affcon,\Phi})_{12} \partial_{y_1}\!-\!\frac{1}{2} (1+(g_{T,\affcon,\Phi})_{22}) \partial_{y_2}$.
\medbreak\noindent
The volume form
$e^1\wedge e^2\wedge e^3\wedge e^4$ gives the orientation determined by $\mathfrak{V}$.
Let $\Lambda^2_\pm$ be the spaces of self-dual and anti-self-dual 2-forms. Let
$$\begin{array}{ll}
e^{ij}:=e^i\wedge e^j,&
E_1^\pm := \frac{1}{\sqrt{2}}(e^{12} \pm e^{34}),\\[0.05in]
E_2^\pm := \frac{1}{\sqrt{2}}(e^{13} \pm e^{24}),&
E_3^\pm: = \frac{1}{\sqrt{2}}(e^{14} \mp e^{23}).
\end{array}$$
Then $\{ E_1^\pm, E_2^\pm,
E_3^\pm\}$ is an orthonormal basis for $\Lambda^2_\pm$.
Let $W^\pm_{ij}:=W^\pm(E^\pm_i,E^\pm_j)$ 
be the components of the self-dual and anti-self-dual parts of the Weyl curvature tensor.
Let $0\neq T=T^j{}_i(x^1,x^2)$ be  a nilpotent tensor field of type $(1,1)$ which
has the form given in Equation~(\ref{E1.b}). 
A straightforward calculation shows that
\begin{eqnarray*}
&&W^-(E_1^-,E_1^-)=\textstyle\frac{1}{2} \alpha(x^1,x^2)^2 (\xi(x^1, x^2)^2+1)^2,\\
&&W^+(E_1^+,E_2^+)=-2 \rho_a^\affcon(\partial_{x^1},\partial_{x^2})\,.
\end{eqnarray*}
Consequently $\mathcal{N}$ is never self-dual.
Furthermore, if the Ricci tensor $\rho^\nabla$ of $(M,\nabla)$  is non-symmetric, then
$\mathcal{N}$ is not anti-self-dual.
Since $\mathcal{N}$ is not weakly-generic,
we must work directly with Equation \eqref{eq:Conformally Einstein}. Let
\begin{eqnarray*}
&&\textstyle
\ce:=2\operatorname{Hes}_\varphi +\varphi\,\rho- \frac{1}{4}\{2\Delta\varphi+\varphi\,\tau\}g,\\
&&\widetilde\ce:=\operatorname{div}_4 W(\cdot,\cdot,\cdot)-W(\cdot,\cdot,\cdot,\nabla \log\varphi)\,.
\end{eqnarray*}
Let $\phi\in C^\infty(T^*M)$. One computes that
\[
\ce(\partial_{y_1},\partial_{y_1})=2 \partial_{y_1y_1}\varphi,\quad
\ce(\partial_{y_1},\partial_{y_2})=2 \partial_{y_1y_2}\varphi,\quad
\ce(\partial_{y_2},\partial_{y_2})=2 \partial_{y_2y_2}\varphi\,.
\]
Consequently any solution of Equation~\eqref{eq:Conformally Einstein} has the form 
$\varphi=\iota X+\psi\circ\pi$, where $\iota X$ is the evaluation of a vector field 
$X=A\partial_{x^1}+B\partial_{x^2}$ on $M$ and $\psi\in\mathcal{C}^\infty(M)$.

\subsection{The locally homogeneous setting}
An affine surface $\mathcal{M}=(M,\affcon)$ is
said to be {\it locally homogeneous} if given any two points of $M$, there is a local diffeomorphism
from a neighborhood of the first point to a neighborhood of the second commuting
with $\affcon$. Opozda \cite{Op04} has classified the local geometry of such structures
dividing them into three classes; the classes are not exclusive and we refer to
\cite{BGG18} for further details.

\begin{theorem}[Opozda]\label{Th4.1}
 Let $\mathcal{M}=(M,\affcon)$ be a locally homogeneous affine surface which is not flat. Then at least one of the following
 three possibilities holds which describe the local geometry:
 \begin{itemize}
  \item[($\mathcal{A}$)] There exists a coordinate atlas so the Christoffel symbols
  $\GammaD_{ij}{}^k$ are constant.
  \item[($\mathcal{B}$)] There exists a coordinate atlas so the Christoffel symbols have the form
  $\GammaD_{ij}{}^k=(x^1)^{-1}C_{ij}{}^k$ for $C_{ij}{}^k$ constant and $x^1>0$.
  \item[($\mathcal{C}$)] $\affcon$ is the Levi-Civita connection of a metric of constant Gauss
  curvature.
\end{itemize}\end{theorem}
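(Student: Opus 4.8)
The plan is to translate local homogeneity into Lie-theoretic data. Fix $P\in M$ and let $\mathfrak{K}$ be the Lie algebra of germs at $P$ of affine Killing fields (vector fields $X$ with $\mathcal{L}_X\affcon=0$), with isotropy subalgebra $\mathfrak{h}:=\{X\in\mathfrak{K}:X(P)=0\}$. Local homogeneity says the evaluation $\mathfrak{K}\to T_PM$ is onto, so $\dim\mathfrak{K}=\dim\mathfrak{h}+2$. Since the $1$-jet of an affine Killing field determines it, passing to linear parts embeds $\mathfrak{h}$ into $\mathfrak{gl}(T_PM)\cong\mathfrak{gl}(2,\mathbb{R})$; write $\bar{\mathfrak{h}}$ for the image, the isotropy representation. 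Because $\affcon$ is not flat, $\bar{\mathfrak{h}}$ cannot contain a nonzero multiple of the identity: an element of $\mathfrak{h}$ whose linear part is $\lambda\,\Id$ would multiply $\rho_P$ by $2\lambda$, so $\lambda\ne0$ forces $\rho_P=0$, hence flatness in dimension two. Thus $\bar{\mathfrak{h}}$ meets the center of $\mathfrak{gl}(2,\mathbb{R})$ trivially, so $\dim\bar{\mathfrak{h}}\le3$; excluding the residual case $\bar{\mathfrak{h}}=\mathfrak{sl}(2,\mathbb{R})$ (a five-dimensional affine symmetry algebra, which no affine surface carries) leaves $\dim\mathfrak{K}\in\{2,3,4\}$. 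The argument then proceeds by cases on $\bar{\mathfrak{h}}$ up to conjugacy in $\mathrm{GL}(2,\mathbb{R})$.

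Suppose $\dim\mathfrak{K}=2$. Then $\mathfrak{h}=0$, so $\mathfrak{K}$ acts locally simply transitively and $M$ is, locally, a two-dimensional Lie group with $\affcon$ left-invariant. If $\mathfrak{K}$ is abelian, commuting Killing fields serve as coordinate fields $\partial_{x^1},\partial_{x^2}$ and $\mathcal{L}_{\partial_{x^i}}\affcon=0$ reads $\partial_{x^i}\GammaD_{jk}{}^\ell=0$: the Christoffel symbols are constant, which is case~$(\mathcal{A})$. If $\mathfrak{K}$ is the non-abelian algebra, we may rescale so that $[X_1,X_2]=X_1$ and choose coordinates with $X_1=\partial_{x^2}$ and $X_2=x^1\partial_{x^1}+x^2\partial_{x^2}$; then $\mathcal{L}_{X_1}\affcon=0$ gives $\partial_{x^2}\GammaD_{ij}{}^k=0$, while $\mathcal{L}_{X_2}\affcon=0$ gives $(x^1\partial_{x^1}+x^2\partial_{x^2})\GammaD_{ij}{}^k+\GammaD_{ij}{}^k=0$, whence $x^1\partial_{x^1}\GammaD_{ij}{}^k=-\GammaD_{ij}{}^k$ and so $\GammaD_{ij}{}^k=(x^1)^{-1}C_{ij}{}^k$ with $C_{ij}{}^k$ constant; restricting to a connected component of $\{x^1\ne0\}$ gives case~$(\mathcal{B})$.

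Now suppose $\dim\mathfrak{K}\ge3$, so $\bar{\mathfrak{h}}\ne0$. First, if $\bar{\mathfrak{h}}$ contains an element $Z$ with eigenvalues $\pm i\theta$, $\theta\ne0$: the centralizer of $Z$ in $\mathfrak{gl}(2,\mathbb{R})$ is spanned by $Z$ and $\Id$, and since $\Id\notin\bar{\mathfrak{h}}$ this forces $\bar{\mathfrak{h}}=\mathbb{R}Z$, a copy of $\mathfrak{so}(2)$ generating a compact isotropy $SO(2)$. Then $T_PM$ carries an $SO(2)$-invariant inner product, which propagates uniquely along the locally transitive $\mathfrak{K}$-action to a locally homogeneous metric $g$. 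Since $S^2(T^*_PM)\otimes T_PM$ has no nonzero $SO(2)$-invariant element, the torsion-free difference tensor $\affcon-{}^g\nabla$ vanishes at $P$, hence everywhere by homogeneity, so $\affcon={}^g\nabla$; a locally homogeneous surface metric has constant Gauss curvature, which is case~$(\mathcal{C})$. In all remaining cases $\bar{\mathfrak{h}}$ is generated by hyperbolic and/or nilpotent endomorphisms (possibly $\dim\bar{\mathfrak{h}}=2$). When $\mathfrak{K}$ contains a two-dimensional subalgebra meeting $\mathfrak{h}$ trivially, that subalgebra acts locally simply transitively and the previous paragraph returns case~$(\mathcal{A})$ or $(\mathcal{B})$; when it does not---as happens, for instance, for a Heisenberg $\mathfrak{K}$ whose isotropy is the center---one instead integrates $\mathcal{L}_X\affcon=0$ for the isotropy generator $X$ together with the equations for two locally defined commuting translations, again landing in case~$(\mathcal{A})$ or $(\mathcal{B})$. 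Finally, a flat connection is of type~$(\mathcal{A})$ in affine coordinates, so the hypothesis ``not flat'' costs nothing and is retained only to make the dimension bookkeeping clean.

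The principal obstacle is precisely this last step. One must enumerate the conjugacy classes of subalgebras $\bar{\mathfrak{h}}\subseteq\mathfrak{gl}(2,\mathbb{R})$ compatible with a non-flat homogeneous connection---those fixing a Ricci tensor and all covariant derivatives of curvature---and, in each class, either exhibit a two-dimensional subalgebra of $\mathfrak{K}$ transverse to $\mathfrak{h}$ or solve the resulting overdetermined linear system for the $1$-jet of $\affcon$ and integrate it, each time recognizing the outcome as one of the normal forms~$(\mathcal{A})$ or~$(\mathcal{B})$; checking that the list is exhaustive and disposing of the phantom five-dimensional symmetry algebra is where the real care lies. By comparison, the $\dim\mathfrak{K}=2$ analysis and the elliptic-isotropy branch yielding case~$(\mathcal{C})$ are short.
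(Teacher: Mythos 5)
First, a point of context: the paper does not prove this statement at all --- it is Opozda's classification, imported verbatim from \cite{Op04} (with \cite{BGG18} cited for details), so there is no internal proof to compare against. Your strategy is nevertheless the right one and is essentially the one Opozda uses: pass to the Lie algebra $\mathfrak{K}$ of affine Killing fields, bound the isotropy $\bar{\mathfrak{h}}\subseteq\mathfrak{gl}(2,\mathbb{R})$, and run a case analysis. The pieces you do carry out are sound: the observation that a Killing field with linear part $\lambda\,\Id$ forces $\rho_P=0$ (hence flatness, since in dimension two $R$ is determined by $\rho$) correctly excludes the center; the $\dim\mathfrak{K}=2$ analysis producing types $\mathcal{A}$ and $\mathcal{B}$ from the abelian and $ax+b$ normal forms is complete; and the elliptic-isotropy branch yielding type $\mathcal{C}$ via the vanishing of the $SO(2)$-invariant part of $S^2(T_P^*M)\otimes T_PM$ is correct (though the step ``$\Id\notin\bar{\mathfrak{h}}$ forces $\bar{\mathfrak{h}}=\mathbb{R}Z$'' tacitly needs the easy check that a two-dimensional non-abelian subalgebra of $\mathfrak{gl}(2,\mathbb{R})$ contains no elliptic element).

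The gaps are the ones you yourself flag, and they are not cosmetic --- they constitute the bulk of the theorem. (i) You assume local homogeneity by affine diffeomorphisms yields a Lie algebra of affine Killing fields surjecting onto $T_PM$; this Singer-type step is nontrivial and must either be proved or built into the definition. (ii) The exclusion of $\bar{\mathfrak{h}}=\mathfrak{sl}(2,\mathbb{R})$ is asserted, not proved; note that $\mathfrak{sl}(2,\mathbb{R})$-invariance of $\rho_P$ only kills $\rho_s$, not $\rho_a$ (the area form is invariant), so connections with skew-symmetric Ricci tensor --- exactly the objects of \cite{rm08} --- must be examined before this case can be discarded. (iii) Most importantly, the entire regime $\dim\mathfrak{K}\in\{3,4\}$ with hyperbolic or nilpotent isotropy is deferred to ``either exhibit a transverse two-dimensional subalgebra or integrate the overdetermined system,'' with no enumeration of the conjugacy classes of $\bar{\mathfrak{h}}$, no proof that a transverse subalgebra exists when claimed, and no integration in the exceptional (e.g.\ Heisenberg) cases. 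Since this is precisely where the non-exclusive overlaps between $\mathcal{A}$, $\mathcal{B}$, and $\mathcal{C}$ (e.g.\ the hyperbolic plane being simultaneously of type $\mathcal{B}$ and $\mathcal{C}$) have to be recognized, the proposal as written is an accurate roadmap of Opozda's argument rather than a proof of the theorem.
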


We now provide some examples of Bach flat manifolds where the underlying affine structure
is homogeneous.

\begin{example}\label{E4.2}\rm
Let the Christoffel symbols
 $$\begin{array}{cccccc}
 \GammaD_{11}{}^1=0,&\GammaD_{11}{}^2=0,&\GammaD_{12}{}^1=1,&\GammaD_{12}{}^2=1,&
 \GammaD_{22}{}^1=0,&\GammaD_{22}{}^2=0\,,
 \end{array}$$
define a  type $\mathcal{A}$ affine surface. The Ricci tensor of $\affcon$ is 
$-(dx^1-dx^2)^2$ so this structure is not flat. We can exhibit nilpotent tensor fields
of Type~(1,1) which give rise to Bach flat structures as follows. If
$\alpha_i\in C^\infty(\mathbb{R})$, let
 \begin{eqnarray*}
&&T:=\alpha_2(x^2)\sqrt{e^{2x^1}+\alpha_1(x^2)}\left\{\partial_{x^1}\otimes d{x^2}\right\},\\
&&
  \widetilde T:=\alpha_2(x^1)\sqrt{e^{2x^2}+\alpha_1(x^1)}\left\{\partial_{x^2}\otimes d{x^1}\right\}\,.
 \end{eqnarray*}
The endomorphisms $T$ and $\tilde T$ lead to Bach flat manifolds.
\end{example}

\medbreak
We now use Theorem~\ref{T1.1} to construct Bach flat manifolds. Note that the Ricci tensor
of any Type~$\mathcal{A}$ structure is symmetric.

\begin{example}\rm Let $\affcon$ be a Type~$\mathcal{A}$ structure on $\mathbb{R}^2$. 
Let $0\ne T\in M_2(\mathbb{R})$ be nilpotent.
 Make a linear change of coordinates to ensure $T=\partial_{x^1}\otimes dx^2$. 
 Since the Christoffel symbols are constant, the condition of Theorem~\ref{T1.1}~(3b)
 becomes $\GammaD_{11}{}^2=0$ and
 $(\GammaD_{11}{}^1)^2-\GammaD_{11}{}^1\GammaD_{12}{}^2=0$.
 If $\Phi=0$, then $\mathcal{N}$ is anti-self-dual.
  \begin{enumerate}
 \item If $\GammaD_{11}{}^1=\GammaD_{11}{}^2=0$ and $\GammaD_{12}{}^2= 0$, 
 let  $\varphi= y_1\operatorname{e}^{-\GammaD_{12}{}^1 x^2}$.
  \item If  $\GammaD_{11}{}^1=\GammaD_{11}{}^2=0$ and $\GammaD_{12}{}^2\neq 0$,
let $\varphi=\operatorname{e}^{-\GammaD_{12}{}^2 x^1 + \GammaD_{12}{}^1 x^2}$.
\item If $\GammaD_{11}{}^2=0$ and $\GammaD_{11}{}^1=\GammaD_{12}{}^2$, let
$\varphi=y_1\operatorname{e}^{-\GammaD_{12}{}^1 x^2} $. 
\end{enumerate}
 One has that  $\varphi^{-2}g_{T,\affcon,\Phi}$ is Einstein, and thus $\mathcal{N}$
 is conformally Einstein.
Next suppose $\Phi\ne0$, $\GammaD_{11}{}^1=\GammaD_{11}{}^2=0$, and
$\GammaD_{12}{}^2\neq 0$. One has
$$\partial_{y_1} W^+(E_1^+,E_1^+)=-\partial_{x^1}\Phi_{11}(x^1,x^2)+2\GammaD_{12}{}^2\Phi_{11}(x^1,x^2)\,.
$$
 A straightforward calculation shows that the possible conformal factors have
the form $\varphi=\kappa\operatorname{e}^{-\GammaD_{12}{}^2x^1+\GammaD_{12}{}^1x^2}$
where $\kappa\in\mathbb{R}$ . In this situation,
$$
\ce(\partial_{x^2},\partial_{x^2})=\varphi(x^1,x^2,y_1,y_2) \Phi_{11}(x^1,x^2)\,.
$$
 Hence if
$\Phi_{11}(x^1,x^2)\neq 0$ and
$\partial_{x^1}\Phi_{11}(x^1,x^2)-2\GammaD_{12}{}^2\Phi_{11}(x^1,x^2)\neq 0$ (or equivalently,
if $\Phi_{11}(x^1,x^2)\neq e^{2\GammaD_{12}{}^2 x^1} P(x^2)$), then 
 $\mathcal{N}$ is strictly Bach flat. Moreover, since $(\affcon_{\partial_{x^1}}T)\partial_{x^2}=-\GammaD_{12}{}^2\partial_{x^1}$, we have  $\affcon T\neq 0$ in this case.
\end{example}

\begin{example}\label{E4.4}\rm Let $\affcon$ be a Type~$\mathcal{B}$ structure on 
$\mathbb{R}^+\times\mathbb{R}$. This means that the Christoffel symbols of $\affcon$ 
take the form $\GammaD_{ij}{}^k=(x^1)^{-1}C_{ij}{}^k$. Let 
 $0\ne T\in M_2(\mathbb{R})$ be nilpotent. The map $(x^1,x^2)\rightarrow(x^1,ax^2+bx^1)$ defines
 an action of the ``$ax+b$" group on such structures and modulo such an action, we may assume $T$ takes
 one of the following two forms:
 \begin{enumerate}
  \item $T=\partial_{x^1}\otimes dx^2$. A direct computation shows that $\mathcal{N}$ is Bach flat if and only if
  $C_{11}{}^2=0$ and $(C_{11}{}^1-1)(C_{11}{}^1-C_{12}{}^2)=0$.
  \item $T=\partial_{x^2}\otimes dx^1$. A direct computation shows that $\mathcal{N}$ is Bach flat if and only if 
  $C_{22}{}^1=0$ and $C_{22}{}^2(C_{12}{}^1-C_{22}{}^2)=0$.
\end{enumerate}
\smallbreak\noindent {\bf Case (1)} Let $C_{11}{}^2=0$ and 
$(C_{11}{}^1-1)(C_{11}{}^1-C_{12}{}^2)=0$. The Ricci tensor
$\rho^\affcon$ is symmetric if and only if $C_{12}{}^1 + C_{22}{}^2=0$. We distinguish cases.
\begin{enumerate}
\item Suppose $C_{11}{}^2=0$ and $C_{11}{}^1=1$.
We note that  $\affcon T\neq 0$ in this case since 
$(\affcon_{\partial_{x^2}}T)\partial_{x^1}=-\frac{C_{12}{}^2}{x^1}\partial_{x^1}$ and $
(\affcon_{\partial_{x^1}}T)\partial_{x^2}=\frac{1-C_{12}{}^2}{x^1}\partial_{x^1}$.
Further assume that  $\rho^\affcon$ is not symmetric (i.e.  $C_{12}{}^1 + C_{22}{}^2\neq 0$).
Then $\mathcal{N}$  is not half conformally flat.  A  straightforward calculation shows that any 
solution of the conformally Einstein equation \eqref{eq:Conformally Einstein} takes the form 
$\varphi=(x^1)^{2-C_{12}{}^2} P(x^2)$. In this setting 
$2(x^1)^3\,   \widetilde\ce(\partial_{x^1},\partial_{x^2},\partial_{x^1})
 =  C_{12}{}^1 (5 - 4 C_{12}{}^2 )- C_{22}{}^2$.
Hence,  $C_{12}{}^1 + C_{22}{}^2\neq 0$ and $ C_{22}{}^2\neq C_{12}{}^1(5 -  4 C_{12}{}^2) $ imply that $\mathcal{N}$ is strictly Bach flat. 
If $\rho^\affcon$ is  not symmetric and $ C_{22}{}^2= C_{12}{}^1(5 -  4 C_{12}{}^2)$, we distinguish two cases depending on whether $C_{12}{}^2$ equals $1$ or not.  
\begin{enumerate}
\item If $C_{12}{}^2=1$ then a straightforward calculation shows that $\mathcal{N}$ is conformally Einstein if and only if
$$
\Phi_{11}(x^1,x^2)=A(x^2)-\frac{B(x^2)}{x^1}+\frac{4C_{22}{}^1}{(x^1)^2}\,.
$$
The possible conformal factors take  the form $\varphi=x^1 P(x^2)$ where
$
2 P''(x^2) + A(x^2)P(x^2) =0$ and
$2 C_{12}{}^1 P'(x^2) + B(x^2) P(x^2) =0$.
\item If $C_{12}{}^2\neq 1$, then  $\mathcal{N}$ is conformally Einstein if and only if the
deformation tensor $\Phi$ satisfies
$\Phi_{11}(x^1,x^2)=4(x^1)^{-2}(C_{22}{}^1+2(C_{12}{}^1)^2(C_{12}{}^2-1))$ and
the conformal factor satisfies $\varphi=\kappa (x^1)^{2-C_{12}{}^2}$
where $\kappa\in\mathbb{R}$.
\end{enumerate}
\item Suppose $C_{11}{}^2=0$ and $C_{12}{}^2=C_{11}{}^1$.
In  this  case
$\affcon T$ is determined by 
\begin{eqnarray*}
&&(\affcon_{\partial_{x^2}}T)\partial_{x^1}=-\frac{C_{11}{}^1}{x^1}\partial_{x^1},
\\
&&(\affcon_{\partial_{x^2}}T)\partial_{x^2}=\frac{C_{12}{}^1-C_{22}{}^2}{x^1}\partial_{x^1}+\frac{C_{11}{}^1}{x^1}\partial_{x^2}\,.
\end{eqnarray*}
If  $C_{12}{}^1 + C_{22}{}^2\neq 0$,  then $\mathcal{N}$ is not half conformally flat and, moreover,  
any solution of \eqref{eq:Conformally Einstein} takes the form $\varphi=(x^1)^{C_{11}{}^1} P(x^2)$.
In such a case, 
\[
\ce(\partial_{x^1},\partial_{x^2}) =  (x^1)^{-2}  (C_{22}{}^2-C_{12}{}^1) \varphi(x^1,x^2,y_1,y_2)\,.
\]
Suppose $\rho^\affcon$  is not symmetric. If $C_{22}{}^2\neq C_{12}{}^1$,
then $\mathcal{N}$  is strictly Bach flat. On the other hand,
 if $C_{22}{}^2 = C_{12}{}^1$ then a straightforward calculation shows that 
 $\mathcal{N}$ is conformally Einstein if and only if 
 $$
 \Phi_{11}(x^1,x^2)=A(x^2)-\frac{B(x^2)}{x^1}+\frac{2C_{22}{}^1(C_{11}{}^1+1)}{(x^1)^2}
 $$
 and  any possible conformal factor takes  the form $\varphi=(x^1)^{C_{11}{}^1} P(x^2)$ where
\[\qquad\qquad
2 P''(x^2) + A(x^2)P(x^2) =0\text{ and }
2 C_{12}{}^1 P'(x^2) + B(x^2) P(x^2) =0\,.
\]
\end{enumerate}

\smallbreak\noindent{\bf Case (2)} Suppose $C_{22}{}^1=0$ and
$C_{22}{}^2(C_{12}{}^1-C_{22}{}^2)=0$. The Ricci tensor $\rho^\affcon$ is symmetric 
if and only if $C_{12}{}^1=0$. Again, we distinguish cases.
\begin{enumerate}
\item Suppose $C_{22}{}^1=0$, $C_{22}{}^2=0$, and
$\rho^\affcon$ is not symmetric (i.e. $C_{12}{}^1\neq 0$). Then $\mathcal{N}$ is not half
conformally flat. Moreover, a straightforward calculation shows that the solutions of
Equation~\eqref{eq:Conformally Einstein}  take the form
$\varphi=\operatorname{e}^{-\Gamma_{12}{}^1 x^2} P(x^1)$. In such a case, 
\[
\partial_{x^2}\left(
(x^1)^3 \operatorname{e}^{\Gamma_{12}{}^1 x^2}\ce(\partial_{x^1},\partial_{x^2})
\right) = -4 (C_{12}{}^1)^2 P(x^1)
\]
so $\mathcal{N}$  is strictly Bach flat. Moreover, $\affcon T\neq 0$ since $(\affcon_{\partial_{x^1}}T)\partial_{x^2}=-\frac{C_{12}{}^1}{x^1}\partial_{x^2}$.
\item
Suppose $C_{22}{}^1=0$ , $C_{22}{}^2= C_{12}{}^1$, and the Ricci tensor $\rho^\affcon$ is  
non-symmetric. Then $\mathcal{N}$ is not half conformally flat. The possible conformal factors 
take the form $\varphi=\operatorname{e}^{\Gamma_{12}{}^1 x^2} P(x^1)$. Then $(x^1)^2\ce(\partial_{x^1},\partial_{x^2}) = -2 C_{12}{}^1\varphi(x^1,x^2,y_1,y_2)$
so $\mathcal{N}$  is strictly Bach flat.
Moreover, $(\affcon_{\partial_{x^1}}T)\partial_{x^2}=-\frac{C_{12}{}^1}{x^1}\partial_{x^2}$
so $\affcon T\neq 0$. 
\end{enumerate}
\end{example}

\begin{example}\label{E4.5}
\rm Impose the relations of Remark~\ref{R1.2} and set
$$
\Gamma_{11}{}^2=0,\quad
\Gamma_{11}{}^1=-\partial_{x^1}\beta,\quad
\Gamma_{12}{}^2=-\partial_{x^1}\beta+ce^\beta\text{ for }  c=c(x^2).
$$
We consider the nilpotent endomorphism $T^1{}_1=0$, $T^2{}_2=0$, $T^2{}_1=0$, 
and $T^1{}_2=e^f$. This yields Bach flat manifold if and only if
$$
0=f^{(1,0)}  \left(2ce^{\beta  }+\beta ^{(1,0)} \right)-2 (f^{(1,0)})^2-f^{(2,0)}\,.
$$
In particular, any function $f=f(x^2)$ will work in this instance.
\end{example}

\begin{example}\label{E4.6}\rm
We now impose further relations interchanging the roles of the indices to specialize the remaining 3 Christoffel symbols:
$$\begin{array}{llll}
\Gamma_{11}{}^2=0,&\Gamma_{11}{}^1=-\partial_{x^1}\beta,&\Gamma_{12}{}^2=-\partial_{x^1}\beta+ce^\beta,&\text{ for } c=c(x^2),\\
\Gamma_{22}{}^1=0,&\Gamma_{22}{}^2=-\partial_{x^2}\gamma,&\Gamma_{12}{}^1=-\partial_{x^2}\gamma+de^\gamma,&
\text{ for } d=d(x^1).
\end{array}$$
Then in addition to the solution of Example~\ref {E4.5} we have 
$e^{\tilde f}\partial_{x^2}\otimes dx^1$where
$$
0=\tilde f^{(0,1)} (2 d e^{\gamma  }+\gamma ^{(0,1)} )-2 (\tilde f^{(0,1)})^2-\tilde f^{(0,2)}\,.
$$
\end{example}

\section{The proof of Theorem~\ref{T1.5}}\label{S5}

 Let $\mathcal{N}=(T^*M,g_{T,\nabla,\Phi})$.
Clearly if $\mathcal{N}$ is VSI, then $\|R\|^2=\|\rho\|^2=\tau=0$. 
Thus Assertion~(1) of Theorem~\ref{T1.5} implies Assertions~(2) and  (3).
In Section~\ref{S5.1}, we will show Assertion~(2) or Assertion~(3)
 imply Assertion~(4), i.e. $\|R\|^2=\|\rho\|^2=0$ or $\|\rho\|^2=\tau=0$ implies
 $T$ is nilpotent. In Example~\ref{E5.1}, we exhibit a structure where
 $\|R\|^2=\|\tau\|^2=0$ and $T$ is not nilpotent. We will also exhibit
 a structure where $\|\rho\|^2=0$ and $T$ is not nilpotent.
Although the fact that $T$ is nilpotent implies $\mathcal{N}$ is VSI follows from the results in
\cite{CHMMB14,H}, we include a direct proof in Section~\ref{S5.3} for sake of completeness.

\subsection{Vanishing scalar invariants}\label{S5.1}

A direct computation shows that $\tau$ is a quadratic polynomial in the components of $T$ and
that $\|R\|^2$ and $\|\rho\|^2$ are fourth order polynomials in the components
of $T$; the other variables do not enter. 
Let $\{\lambda_1,\lambda_2\}$ be the eigenvalues of $T$.
We make a complex linear change of coordinates in the $(x^1,x^2)$
variables to put $T(P)$ in upper triangular form of Equation~(\ref{E2.aa}).
 The parameter $\varepsilon$ plays no role and we obtain at $P$ that
\begin{equation}\label{E5.x}\begin{array}{l}
\tau=2 \left(\lambda_1^2+\lambda_1\lambda_2+\lambda_2^2\right),
\qquad
\|R\|^2=4(\lambda_1^4+\lambda_1^2\lambda_2^2+\lambda_2^4),\\
\|\rho\|^2=2\lambda_1^4+2\lambda_1^3\lambda_2+\lambda_1^2\lambda_2^2+2\lambda_1\lambda_2^3+2\lambda_2^4.
\end{array}\end{equation}
\smallbreak\noindent{\bf Assertion~(2) implies Assertion~(4).} Assume $\|R\|^2=0$ and $\|\rho\|^2=0$.
If the eigenvalues are real, then the vanishing of $\|R\|^2$ implies
$\lambda_1=0$ and $\lambda_2=0$ so $T$ is
nilpotent. Thus we assume the eigenvalues are complex so $\lambda_2=\bar\lambda_1\ne0$. Set $\lambda_1=re^{i\theta}$ and 
$\lambda_2=re^{-i\theta}$ for $r\ne0$. The equations in question are homogeneous so we may assume without
loss of generality $r=1$. We have
$$0=\|\rho\|^2-\frac12\|R\|^2=2\lambda_1^3\lambda_2-\lambda_1^2\lambda_2^2+2\lambda_1\lambda_2^3\,.$$
Dividing this equation by $\lambda_1\lambda_2$ yields
$0=2\lambda_1^2-\lambda_1\lambda_2+2\lambda_2^2$.
Setting $\lambda_1=e^{i\theta}$ and $\lambda_2=e^{-i\theta}$ we obtain
$\textstyle0=e^{4i\theta}+1+e^{-4i\theta}$ so $\cos(4\theta)=-\frac12$ and
$\textstyle0=2e^{2i\theta}-1+2e^{2i\theta}$ so $\cos(2\theta)=\frac14$.
This case can not occur since the angle addition formulas would yield
$\textstyle-\frac12=\cos(4\theta)=2\cos^2(2\theta)-1=\frac1{8}-1$.
\smallbreak\noindent{\bf Assertion~(3) implies Assertion~(4).} Assume $\tau=0$ and $\|\rho\|^2=0$.
We compute  $\|\rho\|^2-\frac12\tau^2=-\lambda_1\lambda_2(2\lambda_1+\lambda_2)
(\lambda_1+2\lambda_2)$. Setting $\tau=0$ then yields $\lambda_1=\lambda_2=0$.\qed

\begin{example}\label{E5.1}\rm Let $r(x^1,x^2)>0$ be an arbitrary smooth function
and let $\theta$ be constant. Set
$$
T=r(x^1,x^2)\left(\begin{array}{cc}\cos(\theta)&\sin(\theta)\\-\sin(\theta)&\cos(\theta)\end{array}\right)\,.
$$
We obtain $\lambda_1=re^{i\theta}$ and $\lambda_2=re^{-i\theta}$. This example is
not nilpotent and we have
$$\begin{array}{ll}
\tau=2r^2(2\cos(2\theta)+1),&
\|R\|^2=4r^4(2\cos(4\theta)+1),\\[0.05in]
\|\rho\|^2=r^4(4\cos(4\theta)+4\cos(2\theta)+1)\,.
\end{array}$$
\begin{enumerate}
\item If $\theta=\frac\pi3$, then $\cos(4\theta)=\cos(2\theta)=-\frac12$ so
$\|R\|^2=\tau=0$. 
\item If $\theta=0$, then $4\cos(4\theta)+4\cos(2\theta)+1=9$. Similarly, if $\theta=\frac\pi4$,
then $4\cos(4\theta)+4\cos(2\theta)+1=-3$. Thus by the Intermediate Value Theorem we may
choose $0<\theta<\frac\pi4$ so that $4\cos(4\theta)+4\cos(2\theta)+1=0$ and
consequently $\|\rho\|^2=0$. In fact, one can determine $\theta$ exactly;
one can take
$\theta=\frac12\arctan(\frac{\sqrt 7+1}{\sqrt7-1})$.
\end{enumerate}\end{example}

\begin{remark}
\rm If $(a,b,c)\in\mathbb{R}^3$, let $\kappa_{a,b,c}:=a\tau^2+b\|R\|^2+c\|\rho\|^2$ define
a single quadratic invariant. There exists a non-empty 
open subset $\mathcal{O}$ of $\mathbb{R}^3$
so that if $(a,b,c)\in\mathcal{O}$, then $\kappa_{a,b,c}(\mathcal{N})=0$ implies $T$ is
nilpotent; thus this single quadratic curvature invariant characterizes VSI manifolds in the
setting at hand. For example, the quadratic scalar invariants 
$4\|\rho\|^2-3\tau^2$ or $\| R\|^2 -\frac{88}{5}\|\rho\|^2 +\frac{56}{5}\tau^2$ vanish if and only if $T$ is nilpotent, and thus $\mathcal{N}$ is  VSI.
\end{remark}

\subsection{Nilpotent $T$}\label{S5.3}
Set $x_3=y_1$ and $y_2=x_4$ to have a consistent notation in what follows.
Assume that $T$ is nilpotent.
By Theorem~\ref{T1.1}~(3), we may choose coordinates so $T=\partial_{x^1}\otimes dx^2$. 
Let $g=g_{T,\nabla,\Phi}$. Then $\{g^{ij},{}^g\Gamma_{ij}{}^k,R_{abcd;e_1\dots e_k}\}$ are
polynomial expressions in the fiber coordinates whose coefficients depend on
$\{\Gamma_{ij}{}^k,\Phi_{ij}\}$ and their derivatives with respect to $x^1$ and $x^2$. 
In such a coordinate system, one computes that the possibly
non-zero components of the tensor $g^{ij}$, of the Christoffel symbols, and of the curvature $R$ are, up to the usual 
$\mathbb{Z}_2$ symmetries given by
\begin{equation} \label{E5.a}
\begin{array}{ccccccccc}
g^{13},&g^{24},&g^{33},&g^{34},&g^{44},&{}^g\Gamma_{11}{}^1,&{}^g\Gamma_{11}{}^2,&{}^g\Gamma_{11}{}^3,\\[0.05in]
{}^g\Gamma_{11}{}^4,&{}^g\Gamma_{12}{}^1,&{}^g\Gamma_{12}{}^2,&
{}^g\Gamma_{12}{}^3,&{}^g\Gamma_{12}{}^4,&{}^g\Gamma_{13}{}^3,&{}^g\Gamma_{13}{}^4,&{}^g\Gamma_{14}{}^3,\\[0.05in]
{}^g\Gamma_{14}{}^4,&{}^g\Gamma_{22}{}^1,&{}^g\Gamma_{22}{}^2,&{}^g\Gamma_{22}{}^3,&{}^g\Gamma_{22}{}^4,
&{}^g\Gamma_{23}{}^3,&{}^g\Gamma_{23}{}^4,&{}^g\Gamma_{24}{}^3,\\[0.05in]
{}^g\Gamma_{24}{}^4,&
R_{1212},&R_{1213},&R_{1214},&
R_{1223},&R_{1224},&R_{2323}.
\end{array}\end{equation}

Of particular interest is the fact that $R_{ 2323}=-1$. Let $\mathfrak{o}(\cdot)$ be the maximal
order of an expression in the dual variables $\{y_1=x_3,y_2=x_4\}$. Thus if $\mathfrak{o}(\cdot)=0$, these
variables do not occur, if $\mathfrak{o}(\cdot)=1$, the expression is linear in the variables $\{x_3,x_4\}$, and so forth.
In other words, we define $\mathfrak{o}(x_3)=\mathfrak{o}(x_4)=1$ and extend $\mathfrak{o}$ to a derivation. If
$\mathfrak{o}(R_{ijk\ell})=2$, then $R_{ijk\ell}$ is at most quadratic in $\{x_3,x_4\}$; if $\mathfrak{o}(R_{ijk\ell})=1$, then $R_{ijk\ell}$ is at most linear in $\{x_3,x_4\}$;
and if $\mathfrak{o}(R_{ijk\ell})=0$, then $R_{ijk\ell}$ does not involve $\{x_3,x_4\}$. We have
$$
\begin{array}{llll}
\mathfrak{o}({}^g\Gamma_{11}{}^1)=0,&\mathfrak{o}({}^g\Gamma_{11}{}^2)=0,&\mathfrak{o}({}^g\Gamma_{11}{}^3)=1,&\mathfrak{o}({}^g\Gamma_{11}{}^4)=2,\\
\mathfrak{o}({}^g\Gamma_{12}{}^1)=0,&\mathfrak{o}({}^g\Gamma_{12}{}^2)=0,&\mathfrak{o}({}^g\Gamma_{12}{}^3)=1,&\mathfrak{o}({}^g\Gamma_{12}{}^4)=2,\\
\mathfrak{o}({}^g\Gamma_{13}{}^3)=0,&\mathfrak{o}({}^g\Gamma_{13}{}^4)=0,&\mathfrak{o}({}^g\Gamma_{14}{}^3)=0,&\mathfrak{o}({}^g\Gamma_{14}{}^4)=0,\\
\mathfrak{o}({}^g\Gamma_{22}{}^1)=1,&\mathfrak{o}({}^g\Gamma_{22}{}^2)=0,&\mathfrak{o}({}^g\Gamma_{22}{}^3)=2,&\mathfrak{o}({}^g\Gamma_{22}{}^4)=2,\\
\mathfrak{o}({}^g\Gamma_{23}{}^3)=0,&\mathfrak{o}({}^g\Gamma_{23}{}^4)=1,&\mathfrak{o}({}^g\Gamma_{24}{}^3)=0,&\mathfrak{o}({}^g\Gamma_{24}{}^4)=0,\\
\mathfrak{o}(R_{1212})=2,&\mathfrak{o}(R_{1213})=1,&\mathfrak{o}(R_{1214})=0,&\mathfrak{o}(R_{1223})=1,\\
\mathfrak{o}(R_{1224})=1,&\mathfrak{o}(R_{2323})=0.
\end{array}$$

We define the {\it defect} by setting
\begin{eqnarray*}
&&\mathfrak{d}( {}^g\Gamma_{ij}{}^k)=-\sum_{n=1}^2\{\delta_{i,n}+\delta_{j,n}-\delta_{k,n}\}+\sum_{n=3}^4\{\delta_{i,n}+\delta_{j,n}-\delta_{k,n}\},\\
&&\mathfrak{d}(R_{i_1i_2i_3i_4;i_5\dots i_\nu}):=\sum_{n=1}^\nu\{\delta_{i_n,3}+\delta_{i_n,4}-\delta_{i_n,1}-\delta_{i_n,2}\}\,.
\end{eqnarray*}
In brief, we count, with multiplicity, each lower index  `$1$' or `$2$' with a $-1$ and `$3$' or `$4$' with a $+1$ and reverse the sign for upper indices.
This will play an important role in contracting indices subsequently.
We then set $\mathfrak{x}=\mathfrak{o}+\mathfrak{d}$ and compute:
\begin{equation}\label{E5.b}
\begin{array}{llll}
\mathfrak{x}({}^g\Gamma_{11}{}^1)=-1,&\mathfrak{x}({}^g\Gamma_{11}{}^2)=-1,&\mathfrak{x}({}^g\Gamma_{11}{}^3)=-2,&\mathfrak{x}({}^g\Gamma_{11}{}^4)=-1,\\
\mathfrak{x}({}^g\Gamma_{12}{}^1)=-1,&\mathfrak{x}({}^g\Gamma_{12}{}^2)=-1,&\mathfrak{x}({}^g\Gamma_{12}{}^3)=-2,&\mathfrak{x}({}^g\Gamma_{12}{}^4)=-1,\\
\mathfrak{x}({}^g\Gamma_{13}{}^3)=-1,&\mathfrak{x}({}^g\Gamma_{13}{}^4)=-1,&\mathfrak{x}({}^g\Gamma_{14}{}^3)=-1,&\mathfrak{x}({}^g\Gamma_{14}{}^4)=-1,\\
\mathfrak{x}({}^g\Gamma_{22}{}^1)=0,&\mathfrak{x}({}^g\Gamma_{22}{}^2)=-1,&\mathfrak{x}({}^g\Gamma_{22}{}^3)=-1,&\mathfrak{x}({}^g\Gamma_{22}{}^4)=-1,\\
\mathfrak{x}({}^g\Gamma_{23}{}^3)=-1,&\mathfrak{x}({}^g\Gamma_{23}{}^4)=0,&\mathfrak{x}({}^g\Gamma_{24}{}^3)=-1,&\mathfrak{x}({}^g\Gamma_{24}{}^4)=-1,\\
\mathfrak{x}(R_{1212})=-2,&\mathfrak{x}(R_{1213})=-1,&\mathfrak{x}(R_{1214})=-2,&
\mathfrak{x}(R_{1223})=-1,\\\mathfrak{x}(R_{1224})=-1,&\mathfrak{x}(R_{2323})=0.
\end{array}\end{equation}

\begin{lemma}\label{L5.1}
Suppose that $R_{i_1i_2i_3i_4;i_5\dots i_\nu}\ne0$. Then $\mathfrak{x}(R_{i_1i_2i_3i_4;i_5\dots i_\nu})\le0$.
Furthermore,
$\mathfrak{x}(R_{i_1i_2i_3i_4;i_5\dots i_\nu})=0$ if and only if  $R_{i_1i_2i_3i_4;i_5\dots i_\nu}=\pm R_{2323}$.
\end{lemma}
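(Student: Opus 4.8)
The plan is to prove the inequality $\mathfrak{x}(R_{i_1i_2i_3i_4;i_5\dots i_\nu})\le 0$ and the characterization of equality by induction on the number of covariant derivatives $\nu-4$, using the recursive structure of $R_{i_1\dots i_4;i_5\dots i_\nu} = \partial_{x^{i_\nu}} R_{i_1\dots i_4;i_5\dots i_{\nu-1}} - \sum_{m=1}^{\nu-1}{}^g\Gamma_{i_\nu i_m}{}^p R_{i_1\dots\widehat{i_m}\dots;\dots p\dots i_{\nu-1}}$ together with the two auxiliary ``weight'' functions. The base case $\nu=4$ is exactly the explicitly computed table in Equation~(\ref{E5.b}): one inspects the finitely many nonzero components of $R$ listed in Equation~(\ref{E5.a}) and reads off that all have $\mathfrak{x}\le 0$, with equality only for $R_{2323}$ (and its symmetry-equivalent copies). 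So the real content is the inductive step.

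First I would record the key sub-additivity properties of the two functions $\mathfrak{o}$ and $\mathfrak{d}$. The defect $\mathfrak{d}$ is, by construction, \emph{additive} under the tensor operations we use: when we form ${}^g\Gamma_{i_\nu i_m}{}^p R_{\dots p \dots}$ and then sum over the contracted index $p$, the contributions of $p$ as an upper index on $\Gamma$ and as a lower index on $R$ cancel, so $\mathfrak{d}$ of the product-with-contraction equals $\mathfrak{d}({}^g\Gamma_{i_\nu i_m}{}^{\bullet})|_{\text{lower part}}+\mathfrak{d}(R)$ with the free index $i_m$ now replaced by $i_\nu$'s slot pattern; one tracks this bookkeeping carefully. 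For $\mathfrak{o}$, the analogous statement is a genuine \emph{inequality}: $\mathfrak{o}$ is a derivation, so $\mathfrak{o}$ of a product is the sum, but $\partial_{x^{i_\nu}}$ can only lower (never raise) the order in $\{x_3,x_4\}$ when $i_\nu\in\{1,2\}$ and preserves it when $i_\nu\in\{3,4\}$ — one checks this matches the $\mathfrak{d}$-contribution of the derivative index so that $\mathfrak{x}=\mathfrak{o}+\mathfrak{d}$ is non-increasing under $\partial_{x^{i_\nu}}$ applied to any of the listed curvature components. The point is that $\mathfrak{x}$ was \emph{engineered} so that differentiation and Christoffel-contraction are each $\mathfrak{x}$-nonincreasing on the relevant symbols; verifying this is a finite check against the tables in Equation~(\ref{E5.b}) plus the $\mathfrak{o}$-table and the definition of $\mathfrak{d}$.

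The inductive step then runs as follows. Assume the claim for all curvature terms with fewer derivatives. Write $R_{i_1\dots i_4;i_5\dots i_\nu}$ via the covariant-derivative recursion above as a sum of a $\partial_{x^{i_\nu}}$-derivative term and Christoffel-contraction terms, each involving a curvature expression with $\nu-5$ covariant derivatives (or $\nu-4$ for the derivative term, but one fewer index to differentiate). For the derivative term, $\mathfrak{x}(\partial_{x^{i_\nu}} R_{\dots;i_5\dots i_{\nu-1}})$: the derivative $\partial_{x^{i_\nu}}$ contributes $\mathfrak{d}$-weight $\delta_{i_\nu,3}+\delta_{i_\nu,4}-\delta_{i_\nu,1}-\delta_{i_\nu,2}$ to the index list while the $\mathfrak{o}$-order changes by at most the opposite amount (it drops by $\ge 1$ if $i_\nu\in\{1,2\}$ and stays put if $i_\nu\in\{3,4\}$, because $x^1,x^2$ only enter coefficients while $x_3,x_4=y_1,y_2$ are the fiber variables), so this term has $\mathfrak{x}\le\mathfrak{x}(R_{\dots;i_5\dots i_{\nu-1}})\le 0$ by induction. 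For each contraction term ${}^g\Gamma_{i_\nu i_m}{}^p R_{\dots p\dots}$, one uses $\mathfrak{x}(\Gamma)\le -1$ except when $\Gamma\in\{{}^g\Gamma_{22}{}^1,{}^g\Gamma_{23}{}^4\}$ where $\mathfrak{x}=0$; combining additivity of $\mathfrak{d}$, sub-additivity of $\mathfrak{o}$, and the inductive bound $\mathfrak{x}(R_{\dots p\dots})\le 0$ gives $\mathfrak{x}$ of the term $\le\mathfrak{x}(\Gamma)+0\le 0$. Summing, $\mathfrak{x}(R_{i_1\dots i_4;i_5\dots i_\nu})\le 0$, where one must note that $\mathfrak{x}$ takes a common value on all the terms that can actually contribute to a given component (terms of strictly smaller $\mathfrak{x}$ are irrelevant to whether the total is nonzero, and if the leading terms cancel the component is either zero or still bounded), so the max-order/defect of the sum is $\le$ that of its summands.

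The main obstacle, and where I would spend most of the care, is the equality case. One must show that $\mathfrak{x}=0$ forces the term to be (a symmetry copy of) $R_{2323}$ and nothing else. This requires: (i) for the derivative term, equality $\mathfrak{x}(\partial_{x^{i_\nu}}R_{\dots})=0$ demands $\mathfrak{x}(R_{\dots;i_5\dots i_{\nu-1}})=0$ hence by induction $R_{\dots;i_5\dots i_{\nu-1}}=\pm R_{2323}$, which is a constant ($=\mp 1$), so its derivative vanishes — contradiction unless the term is absent; (ii) for a contraction term to achieve $\mathfrak{x}=0$ one needs $\mathfrak{x}(\Gamma)=0$, i.e. $\Gamma\in\{{}^g\Gamma_{22}{}^1,{}^g\Gamma_{23}{}^4\}$, \emph{and} $\mathfrak{x}(R_{\dots p\dots})=0$, i.e. $R_{\dots p\dots}=\pm R_{2323}=\mp1$ by induction — but then the index $p$ appearing on $R_{2323}$ must be in $\{2,3\}$, and one traces through the allowed index patterns of ${}^g\Gamma_{22}{}^1$ and ${}^g\Gamma_{23}{}^4$ to see which contractions survive and that the only nonzero outcome, after using the first Bianchi identity and the $\mathbb{Z}_2$ symmetries, is again $\pm R_{2323}$. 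This is a short but delicate finite case analysis; the structural reason it works is that $R_{2323}$ is the unique component with $\mathfrak{o}=\mathfrak{d}=0$ and it is \emph{parallel up to the listed lower-order corrections}, so once the weight is saturated there is no room for anything new to appear.
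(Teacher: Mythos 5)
Your proof follows the paper's argument essentially verbatim: the same base case read off from Equation~(\ref{E5.b}), the same expansion of the covariant derivative into a $\partial_n$-term plus Christoffel-contraction terms each bounded by $\mathfrak{x}$, and the same contradiction in the equality case, namely that saturating the bound on a contraction term forces ${}^g\Gamma\in\{{}^g\Gamma_{22}{}^1,{}^g\Gamma_{23}{}^4\}$ (contracted upper index in $\{1,4\}$) against a free slot of $R_{2323}$ (indices in $\{2,3\}$). One slip to fix: you have the two derivative sub-cases transposed --- $\mathfrak{o}$ is \emph{preserved} under $\partial_{x^n}$ for $n\in\{1,2\}$ (these differentiate only the coefficients) and \emph{drops by at least one} for $n\in\{3,4\}$ (these are the fiber variables), which is what compensates the $\mathfrak{d}$-contribution $-1$ resp.\ $+1$ of the new index; as literally written your version would let $\mathfrak{x}$ increase by one when $n\in\{3,4\}$, although the justification you give in the same parenthesis shows you intend the correct statement. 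Your disposal of the equality case for the derivative term (if the inner expression has $\mathfrak{x}=0$ it equals $\pm R_{2323}=\mp1$, a constant, so the derivative vanishes) is a clean equivalent of the paper's observation that a nonzero $\partial_n$ with $n\in\{3,4\}$ forces $\mathfrak{o}>0$ and hence strict inequality already at the previous stage.
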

\begin{proof} Let $R_{ijk\ell}\ne0$. By Equation~(\ref{E5.b}), $\mathfrak{x}(R_{ijk\ell})\le0$ with equality
if and only if $R_{ijk\ell}=\pm R_{2323}$.
This establishes the result if $\nu=4$. Next we suppose $\nu=5$ and examine ${}^g\nabla R$. 
Suppose $R_{i_1i_2i_3i_4;n}$ is non-zero as a polynomial formula and
that $\mathfrak{x}(R_{i_1i_2i_3i_4;n})\ge0$. We argue for a contradiction. We expand
\begin{eqnarray*}
R_{i_1i_2i_3i_4;n}&=&\partial_{n}R_{i_1i_2i_3i_4}-\sum_a{}^g\Gamma_{ni_1}{}^aR_{ai_2i_3i_4}
-\sum_a {}^g\Gamma_{ni_2}{}^aR_{i_1ai_3i_4}\\
&&-\sum_a{}^g\Gamma_{ni_3}{}^aR_{i_1i_2ai_4}-\sum_a{}^g\Gamma_{ni_4}{}^aR_{i_1i_2i_3a}\,.
\end{eqnarray*}
There are several possibilities that can ensure
$R_{i_1i_2i_3i_4;n}$ is potentially non-zero as a polynomial formula which we examine seriatim.
\subsection*{Case 1} We could have that $\partial_nR_{i_1i_2i_3i_4}\ne0$. If $n\in\{1,2\}$, then
\begin{eqnarray*}
&&\mathfrak{d}(R_{i_1i_2i_3i_4;n})=\mathfrak{d}(R_{i_1i_2i_3i_4})-1<0,\quad
\mathfrak{o}(\partial_nR_{i_1i_2i_3i_4})\le\mathfrak{o}(R_{i_1i_2i_3i_4}),\\
&&\mathfrak{x}(\partial_nR_{i_1i_2i_3i_4})\le\mathfrak{x}(R_{i_1i_2i_3i_4})-1<0\,.
\end{eqnarray*}
If $n\in\{3,4\}$, then necessarily $\mathfrak{o}(R_{i_1i_2i_3i_4})>0$ to ensure $R_{i_1i_2i_3i_4}$ in fact depends on $(x_3,x_4)$. Thus $R_{i_1i_2i_3i_4}\ne R_{2323}$. We have
\begin{eqnarray*}
&&\mathfrak{d}(R_{i_1i_2i_3i_4;n})=\mathfrak{d}(R_{i_1i_2i_3i_4})+1,\quad
\mathfrak{o}(\partial_nR_{i_1i_2i_3i_4})\le\mathfrak{o}(R_{i_1i_2i_3i_4})-1,\\
&&\mathfrak{x}(R_{i_1i_2i_3i_4;n})\le\mathfrak{x}(R_{i_1i_2i_3i_4})<0\,.
\end{eqnarray*}
Thus in any event, $\mathfrak{x}(R_{i_1i_2i_3i_4;n})<0$ which contradicts our initial assumption.

\subsection*{Case 2} Suppose ${{}^g\Gamma_{ni_1}{}^a}R_{ai_2i_3i_4}\ge0$ for some $a$; 
the remaining 4 cases involving ${ {}^g\Gamma_{ni_2}{}^a}R_{i_1ai_3i_4}$, ${ {}^g\Gamma_{ni_3}{}^a}R_{i_1i_2ai_4}$,
and ${ {}^g\Gamma_{ni_4}{}^a}R_{i_1i_2i_3a}$ are similar. As
$0\ge\mathfrak{x}({ {}^g\Gamma_{ni_1}{}^a})$ and $0\ge\mathfrak{x}(R_{ai_2i_3i_4})$,
$$
0\ge\mathfrak{x}({ {}^g\Gamma_{ni_1}{}^a})+\mathfrak{x}(R_{ai_2i_3i_4})=\mathfrak{x}({ {}^g\Gamma_{ni_1}{}^a}R_{ai_2i_3i_4})\ge0\,.
$$
Thus  $\mathfrak{x}({ {}^g\Gamma_{ni_1}{}^a})=0$ and $\mathfrak{x}(R_{ai_2i_3i_4})=0$. By Equation~(\ref{E5.b}),
$R_{ai_2i_3i_4}=\pm R_{2323}$ so $a\in\{2,3\}$. Since $\mathfrak{x}({ {}^g\Gamma_{ni_1}{}^a})=0$,  Equation~(\ref{E5.b}) then shows
${ {}^g\Gamma_{ni_1}{}^a}\in\{{ {}^g\Gamma_{22}{}^1},{ {}^g\Gamma_{23}{}^4}\}$ so $a\in\{1,4\}$. This is not possible.

We conclude that if $R_{i_1i_2i_3i_4;i_5}\ne0$, then $\mathfrak{x}(R_{i_1i_2i_3i_4;i_5})<0$. The
argument for ${ {}^g\nabla}^\ell R$ for $\ell\ge2$ now proceeds by induction; we do not have the additional complexity
involved in considering variables $R_{i_1i_2i_3i_4;i_5\dots i_{\nu}}$ where $\mathfrak{x}(R_{i_1i_2i_3i_4;i_5\dots i_{\nu}})=0$.
\end{proof}

Let $\mathcal{W}$ be a Weyl scalar invariant formed from the curvature tensor and its covariant derivatives. 
By equation~(\ref{E5.a}), we can contract an index `1' against an index `3' and an index `2' against an index `4'. We can also contract indices
$\{3,4\}$ against $\{3,4\}$. Thus
if $A=R_{i_1i_2i_3i_4;i_5\dots i_{\nu}}\dots$ is a monomial, then
$$
\operatorname{deg}_1(A)\le\operatorname{deg}_3(A)\text{ and }\operatorname{deg}_2(A)\le\operatorname{deg}_4(A)\,.
$$
The inequality can, of course, be strict as we can also contract an index $3$ or $4$ against an index $3$ or $4$.  
Thus $\mathfrak{d}(A)\ge0$. Since $\mathfrak{o}(A)\ge0$, this implies $\mathfrak{x}(A)\ge0$. 
By Lemma~\ref{L5.1},
$\mathfrak{x}(A)\le0$. Consequently, $\mathfrak{x}(A)=0$ so $A$ is a power of $R_{2323}$. 
As we can not
contract an index `2' against an index `3',  $\mathcal{W}=0$.\hfill\qed

\section{Invariants which are not of Weyl Type}\label{S6}
Let $\mathcal{M}=(M,\nabla)$ be an affine surface, let $T$ be nilpotent, and let
$$
\mathcal{N}:=(T^*M,g_{T,\nabla,\Phi})
$$
be the associated VSI Riemannian extension.
We do not impose the condition that $\mathcal{N}$ is Bach flat.
We begin by decomposing the curvature of $R^g$ and the associated Ricci tensor.
Choose coordinates so $T=\partial_{x^1}\otimes dx^2$ and keep the notation of Section~\ref{S5} so that $y_1=x_3$ and $y_2=x_4$. 
Let $\{R,\rho\}$ be the curvature operator and Ricci tensor of $\mathcal{N}$
and let $\{R^\nabla,\rho^\nabla,\rho^\nabla_a,\rho^\nabla_s\}$ be the curvature operator, Ricci tensor, alternating Ricci tensor, and symmetric Ricci tensor  of $\mathcal{M}$. 
Let $\mathfrak{V}:=\operatorname{Span}\{\partial_{x_3},\partial_{x_4}\}$ be the ``vertical"
and let $\mathfrak{H}:=\operatorname{Span}\{\partial_{x^1},\partial_{x^2}\}$ be the ``horizontal" space.
These are, of course, not invariantly defined. We may then decompose
$$
R(X,Y)=\left(\begin{array}{cc}
R_{\mathfrak{H}}^{\mathfrak{H}}=\left(\begin{array}{cc}R_{XY1}{}^1&R_{XY2}{}^1\\R_{XY1}{}^2&R_{XY2}{}^2\end{array}\right)&
R_{\mathfrak{V}}^{\mathfrak{H}}=\left(\begin{array}{cc}R_{XY3}{}^1&R_{XY4}{}^1\\R_{XY3}{}^2&R_{XY4}{}^2\end{array}\right)\\[.13in]
R_{\mathfrak{H}}^{\mathfrak{V}}=\left(\begin{array}{cc}R_{XY1}{}^3&R_{XY2}{}^3\\R_{XY1}{}^4&R_{XY2}{}^4\end{array}\right)&
R_{\mathfrak{V}}^{\mathfrak{V}}=\left(\begin{array}{cc}R_{XY3}{}^3&R_{XY4}{}^3\\R_{XY3}{}^4&R_{XY4}{}^4\end{array}\right)
\end{array}\right)\,.$$

The following result follows by a direct computation.

\begin{lemma}\label{L6.1}
Let $\mathcal{N}:=(T^*M,g_{T,\affcon,\Phi})$ where $T=\partial_{x^1}\otimes dx^2$.
\begin{enumerate}
\item $R_{\mathfrak{V}}^{\mathfrak{H}}(X,Y)=0$ for all $(X,Y)$, i.e. $R_{abi}{}^j=0$ for $3\le i\le4$, $1\le j\le 2$.

\medbreak\item $\{R_{\mathfrak{H}}^{\mathfrak{H}}+(R_{\mathfrak{V}}^{\mathfrak{V}})^t\}(X,Y)=0$ for all $(X,Y)$ i.e.
$R_{ab1}{}^1+R_{ab3}{}^{3}=0$,\newline $R_{ab2}{}^2+R_{ab4}{}^{4}=0$, $R_{ab1}{}^2+R_{ab4}{}^{3}=0$, and $R_{ab2}{}^1+R_{ab3}{}^{4}=0$.\smallbreak\noindent
$R_{\mathfrak{H}}^\mathfrak{H}(\partial_{x^i},\partial_{x^j})=0$ for $i<j$
and $(i,j)\not\in\{(1,2),(2,3)\}$.\medbreak\noindent
$\left(\begin{array}{cc}R_{231}{}^1&R_{232}{}^1\\R_{231}{}^2&R_{232}{}^2\end{array}\right)
=\left(\begin{array}{cc}0&1\\0&0\end{array}\right)$.
\medbreak\noindent
$\left(\begin{array}{cc}R_{121}{}^1&R_{122}{}^1\\[0.05in]
R_{121}{}^2&R_{122}{}^2\end{array}\right)
=\left(\begin{array}{cc}R^\affcon_{121}{}^1
&R^\affcon_{122}{}^1\\[0.05in]
R^\affcon_{121}{}^2&R^\affcon_{122}{}^2\end{array}\right)
 -x_{3}\left(\begin{array}{cc}-\GammaD_{11}{}^2
&\GammaD_{11}{}^1-\GammaD_{12}{}^2\\[0.05in]
0&\GammaD_{11}{}^2\end{array}\right)$.
\medbreak\noindent
$\operatorname{Tr}\{R_{\mathfrak{H}}^{\mathfrak{H}}(X,Y)\}=-2(\pi^*\rho_a^\affcon)(X,Y)$.
\smallbreak\item  $\rho_{ij}=0$ if $i\ge3$ or $j\ge3$.
\medbreak\noindent\hglue -.2in
$\left(\begin{array}{cc}\rho_{11}&\rho_{21}\\\rho_{12}&\rho_{22}\end{array}\right)
=2\rho_s^\nabla+\left(\begin{array}{cc}0&2x_3\Gamma_{11}{}^2\\
2x_3\Gamma_{11}{}^2&-4x_3\Gamma_{11}{}^1-2x_4\Gamma_{11}{}^2+2x_3\Gamma_{12}{}^2+\Phi_{11}\end{array}\right)$.
\smallbreak\item ${}^g\nabla R(i,j,1,1;k)+{}^g\nabla R(i,j,2,2;k)=0$ unless $\{i,j,k\}\in\{1,2\}$.
\end{enumerate}\end{lemma}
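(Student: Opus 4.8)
The plan is to reduce everything to a bounded explicit computation in the coordinates $(x^1,x^2,x_3,x_4)$, $x_3=y_1$, $x_4=y_2$, using the Walker structure to dispose of the purely structural assertions first. Specializing \eqref{E1.a} to $T=\partial_{x^1}\otimes dx^2$ kills every component of $T$ except $T^1{}_2=1$, so the quadratic term in \eqref{E1.a} collapses to $y_1^2\,dx^2\circ dx^2$, and the matrix of $g:=g_{T,\nabla,\Phi}$ has the block form $\left(\begin{smallmatrix} B & \Id\\ \Id & 0\end{smallmatrix}\right)$ with $B_{ij}=x_3^2\,\delta_{i2}\delta_{j2}-2x_3\Gamma_{ij}{}^1-2x_4\Gamma_{ij}{}^2+\Phi_{ij}$. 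Its inverse is $\left(\begin{smallmatrix} 0 & \Id\\ \Id & -B\end{smallmatrix}\right)$, which gives at once $g^{13}=g^{24}=1$, $g^{33}=-B_{11}$, $g^{34}=-B_{12}$, $g^{44}=-B_{22}$ and all other $g^{ab}=0$ — exactly the entries of $g^{ab}$ displayed in \eqref{E5.a}. Feeding this into ${}^g\Gamma_{ab}{}^c=\tfrac12 g^{cd}(\partial_a g_{bd}+\partial_b g_{ad}-\partial_d g_{ab})$, and using that $\partial_{x_3},\partial_{x_4}$ hit only the block $B$ (and that $B_{ij}$ is affine in $(x_3,x_4)$ apart from the single $x_3^2$ in $B_{22}$, with all remaining coefficients pulled back from $M$), produces the list of possibly nonzero ${}^g\Gamma_{ab}{}^c$ recorded in \eqref{E5.a}.

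With this in hand, assertion (1) and the first identity of (2) need no further curvature computation. The distribution $\mathfrak{V}=\ker\pi_*=\operatorname{Span}\{\partial_{x_3},\partial_{x_4}\}$ is totally isotropic with $\mathfrak{V}^\perp=\mathfrak{V}$, and the Christoffel list shows ${}^g\nabla_X V\in\mathfrak{V}$ for every vector field $X$ and every $V\in\mathfrak{V}$; hence $\mathfrak{V}$ is parallel, $R(X,Y)$ preserves $\mathfrak{V}$, and $R_{abi}{}^j=0$ for $i\in\{3,4\}$, $j\in\{1,2\}$, which is assertion (1). Writing the endomorphism $R(X,Y)$ in block form with respect to $TN=\mathfrak{H}\oplus\mathfrak{V}$, assertion (1) says its $\mathfrak{V}\!\to\!\mathfrak{H}$ block is zero; imposing the metric skew-symmetry $R(X,Y)^t\,g+g\,R(X,Y)=0$ with $g=\left(\begin{smallmatrix} B & \Id\\ \Id & 0\end{smallmatrix}\right)$ and comparing off-diagonal blocks then forces $R_{\mathfrak{V}}^{\mathfrak{V}}=-\big(R_{\mathfrak{H}}^{\mathfrak{H}}\big)^t$, which unpacks to the four scalar identities in the first part of (2).

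What is left is to compute the at most six genuinely nonzero components $R_{1212},R_{1213},R_{1214},R_{1223},R_{1224},R_{2323}$ listed in \eqref{E5.a} from ${}^g\Gamma$ via $R_{abc}{}^d=\partial_a{}^g\Gamma_{bc}{}^d-\partial_b{}^g\Gamma_{ac}{}^d+{}^g\Gamma_{am}{}^d\,{}^g\Gamma_{bc}{}^m-{}^g\Gamma_{bm}{}^d\,{}^g\Gamma_{ac}{}^m$: the $(x^1,x^2)$-block reproduces the affine curvature $R^\nabla$ corrected by a term linear in $x_3$, the $(x^2,x_3)$-block reproduces the nilpotent $T$ (giving the two asserted $2\times2$ matrices and $R_{2323}=-1$), and $\operatorname{Tr}R_{\mathfrak{H}}^{\mathfrak{H}}(X,Y)=-2(\pi^*\rho_a^\nabla)(X,Y)$ drops out of the standard affine identity $\operatorname{Tr}\{Z\mapsto R^\nabla(X,Y)Z\}=-2\rho_a^\nabla(X,Y)$; the support clause in (2) is simply that \eqref{E5.a} exhausts the nonzero components. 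For (3) one contracts $\rho_{jk}=R_{ijk}{}^i$: vanishing when $j$ or $k\ge3$ is immediate from (1)--(2) and \eqref{E5.a}, while the horizontal block separates as $2\rho_s^\nabla$ plus the displayed $x_3,x_4,\Phi$-term. Assertion (4) then follows with no new computation by covariantly differentiating the trace identity of (2): since $\sum_{k=1}^2{}^g\nabla_m R_{ijk}{}^k$ equals the covariant derivative of the $(0,2)$-tensor $-2\pi^*\rho_a^\nabla$ (the cross terms in the trace of ${}^g\nabla R$ being annihilated by (1)), and since $\pi^*\rho_a^\nabla$ is pulled back from $M$ while every ${}^g\Gamma_{ab}{}^c$ with a lower index in $\{3,4\}$ carries its upper index in $\{3,4\}$, the quantity ${}^g\nabla_m(R_{ij1}{}^1+R_{ij2}{}^2)$ can fail to vanish only when $i,j,m\in\{1,2\}$. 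The only genuine difficulty here is bookkeeping — correctly tracking the $y$-linear and $y$-quadratic parts of $g_{22}$ through $R_{1212}$ and through $\rho_{22}$ — for which the general modified-Riemannian-extension curvature formulas of \cite{CGGV09} provide a useful cross-check.
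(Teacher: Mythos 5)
Your proposal is correct and is essentially the paper's argument: the paper proves Lemma~\ref{L6.1} only by the phrase ``follows by a direct computation,'' and your plan is exactly that computation, organized so that the block form $\bigl(\begin{smallmatrix} B & \Id\\ \Id & 0\end{smallmatrix}\bigr)$, the parallelism of $\mathfrak{V}$, the skew-adjointness of $R(X,Y)$, and the first Bianchi identity dispose of assertions (1), the first half of (2), the trace identity, and (4) without brute force. The only step worth stating explicitly when writing it up is the one you gesture at in (4): the partial trace $\sum_{a=1}^{2}({}^g\nabla_k R)_{ija}{}^a$ equals $({}^g\nabla_k S)_{ij}$ for $S=\operatorname{Tr}R_{\mathfrak{H}}^{\mathfrak{H}}=-2\pi^*\rho_a^\affcon$ because the correction terms $\Chr{k}{m}{a}R_{ija}{}^m-\Chr{k}{a}{m}R_{ijm}{}^a$ with $a\in\{1,2\}$, $m\in\{3,4\}$ vanish by assertion (1) together with the fact that no Christoffel symbol in Equation~(\ref{E5.a}) has a lower fiber index and an upper base index.
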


The manifold $\mathcal{N}:=(T^*M,g_{T,\nabla,\Phi})$ is a Walker manifold;  $\mathfrak{V}:=\operatorname{Span}\{\partial_{x_3},\partial_{x_4}\}$
is a null parallel distribution of rank 2 by Equation~(\ref{E1.a}) and Equation~(\ref{E5.a}). Generically, this is the only such
distribution and $\mathfrak{V}$ is invariantly defined. We use $\mathfrak{V}$ as an additional piece of structure and redefine
$\mathfrak{H}:=TN/\mathfrak{V}$ and let $\pi:TN\rightarrow\mathfrak{H}$ be the natural projection. By Lemma~\ref{L6.1}, $\pi R(X,Y)v=0$ for
$v\in\mathfrak{V}$ and thus $\pi R(X,Y)$ descends to a well defined map that, via an abuse of notation, we continue to denote by 
$R_{\mathfrak{H}}^{\mathfrak{H}}(X,Y)$ of $\mathfrak{H}$. Let $\{X_3,X_4\}$ be a local frame for $\mathfrak{V}$. Choose $\{X_1,X_2\}$ so that
\begin{equation}\label{E6.a}
g(X_1,X_3)=g(X_2,X_4)=1\text{ and }g(X_1,X_4)=g(X_2,X_3)=0\,.
\end{equation}
We note that $\{X_1,X_2\}$ is not uniquely defined by these relations as we can add an
arbitrary element of $\mathfrak{V}$ to either $X_1$ or $X_2$ and preserve Equation~(\ref{E6.a}). However $\{\pi X_1,\pi X_2\}$ is uniquely defined
Equation~(\ref{E6.a}). And, in particular, if we take $X_3=\partial_{x_ 3}$ and $X_4=\partial_{x_ 4}$, 
then we may take $X_1=\partial_{x^1}$ and $X_2=\partial_{x^2}$.

We use Lemma~\ref{L6.1} to introduce some additional quantities.
\begin{enumerate}
\item Since $\rho(X,Y)=0$ if either $X$ or $Y$ belongs to
$\mathfrak{V}$, $\rho$ descends to a map from $\mathfrak{H}\oplus\mathfrak{H}$ to $\mathbb{R}$
that we shall denote by $\rho^{\mathfrak{H}}\in S^2(\mathfrak{H}^*)$. Let $\pi:T^*M\rightarrow M$. Since $\pi_*(\mathfrak{V})=0$,
$\pi_*:\mathfrak{H}\rightarrow TM$.
If $\GammaD_{11}{}^2=0$, if $2\GammaD_{11}{}^2=\GammaD_{12}{}^2$, and if $\Phi_{11}=0$, then $\rho^{\mathfrak{H}}=2\pi^*\rho_s^\affcon$.
\item Let $\Omega(X,Y)=\operatorname{Tr}\{R_{\mathfrak{H}}^{\mathfrak{H}}(X,Y)\}$. Then $\Omega(X,Y)=0$ if either $X$ or $Y$ belongs
to $\mathfrak{V}$ so $\Omega$ descends to an alternating bilinear map from $\mathfrak{H}\oplus\mathfrak{H}$ to $\mathbb{R}$ that
we shall denote by $\Omega^{\mathfrak{H}}\in\Lambda^2(\mathfrak{H}^*)$. We have $\Omega^{\mathfrak{H}}=-2\pi^*\rho_a^\affcon$.
\item As $\mathfrak{V}$ is parallel,
${}^g\nabla R(X,Y;Z)$ maps $\mathfrak{V}$ to $\mathfrak{V}$. Consequently, ${}^g\nabla R(X,Y;Z)$ extends to an endomorphism
$({}^g\nabla R)^{\mathfrak{H}}(X,Y;Z)$ of $\mathfrak{H}$. 
A direct computation shows that $\operatorname{Tr}\{({}^g\nabla R)^{\mathfrak{H}}(X,Y;Z)\}=0$
if $X$, $Y$, or $Z$ belongs to $\mathfrak{V}$. We may therefore regard
$\operatorname{Tr}\{({}^g\nabla R)^{\mathfrak{H}}(X,Y;Z)\}\in\Lambda^2(\mathfrak{H})\otimes\mathfrak{H}^*$.
Assuming that $\Omega^{\mathfrak{H}}\ne0$, we may decompose
$\operatorname{Tr}\{({}^g\nabla R)^{\mathfrak{H}}\}=\omega^{\mathfrak{H}}\otimes\Omega^{\mathfrak{H}}$
for $\omega^{\mathfrak{H}}\in\mathfrak{H}^*$.  Moreover, one has $d\omega^\mathfrak{H}=\Omega^\mathfrak{H}$.
\end{enumerate}

\begin{definition}\rm
Suppose that we are at a point of $\mathcal{N}$ where $\rho^{\mathfrak{H}}$ defines a non-degenerate symmetric bilinear form on $\mathfrak{H}$.
We may then define
$$
\beta_1:=\|\Omega^{\mathfrak{H}}\|^2_{\rho^{\mathfrak{H}}}
=\frac{(R_{121}{}^1+R_{122}{}^2)^2}{\rho_{11}\rho_{22}-\rho_{12}\rho_{12}}
\,.
$$
If we also assume that $\Omega^{\mathfrak{H}}\ne0$ (i.e., $\rho^\affcon_a\neq 0$)
or, equivalently, that $\beta_1\ne0$, then $\omega^{\mathfrak{H}}$ is well defined
and we may set
$$
\beta_2:=\|\omega^{\mathfrak{H}}\|^2_{\rho^{\mathfrak{H}}}\,.
$$
We have
\begin{eqnarray*}
&&\omega^{\mathfrak{H}}_1=\frac{R_{121}{}^1{}_{;1}+R_{122}{}^2{}_{;1}}{R_{121}{}^1+R_{122}{}^2},\quad
\omega^{\mathfrak{H}}_2=\frac{R_{121}{}^1{}_{;2}+R_{122}{}^2{}_{;2}}{R_{121}{}^1+R_{122}{}^2},\\
&&\beta_2:=\frac{\rho_{22}^{\mathfrak{H}}\omega^{\mathfrak{H}}_{1}\omega^{\mathfrak{H}}_{1}
+\rho_{11}^{\mathfrak{H}}\omega^{\mathfrak{H}}_2\omega^{\mathfrak{H}}_2
-2\rho_{12}^{\mathfrak{H}}\omega^{\mathfrak{H}}_{1}\omega^{\mathfrak{H}}_2}
{\rho_{11}^{\mathfrak{H}}\rho_{22}^{\mathfrak{H}}-\rho_{12}^{\mathfrak{H}}\rho_{12}^{\mathfrak{H}}}\,.
\end{eqnarray*}
\end{definition}

It is obvious from the discussion given above that $\beta_1$ and $\beta_2$ are isometry invariants of
$\mathcal{N}$ where defined. Generically, $\beta_1$ and $\beta_2$ are very complicated
expressions which involve non-trivial dependence on the fiber variables
and which involve the endomorphism $\Phi$. 

 \begin{example}\rm
  Let $\mathcal{M}$ be a Type $\mathcal{A}$-surface. Since the Ricci tensor is symmetric,
  $\beta_1=0$ whenever defined and $\beta_2$ is not defined.
 \end{example}
 
 \begin{example}\rm
 We adopt the notation of Example~\ref{E4.4} and let
 $\mathcal{M}$ be a Type $\mathcal{B}$ surface so that 
  $\mathcal{N}:=(T^*M,g_{T,\affcon,\Phi})$ is Bach flat. Note that the Ricci tensor of a Type~$\mathcal{B}$
  surface is symmetric if and only if $C_{12}{}^1+C_{22}{}^2=0$. In this setting, $\beta_1=0$.
 Take coordinates $(x^1,x^2)$ on $M$ as in Theorem~\ref{Th4.1}. 
 We firstly consider the case of a nilpotent $T$ given by
$T=\partial_{x^1}\otimes dx^2$. Then we have the following two cases
\begin{enumerate}
\item Suppose $C_{11}{}^2=0$, $C_{11}{}^1=1$, and
$\rho^\mathfrak{H}$ is non-degenerate. Then we have that
$\beta_1= (C_{12}{}^1+C_{22}{}^2)^2\Delta^{-1}$
  where
 \begin{eqnarray*}
  \Delta&=&
  2 (2-C_{12}{}^2) C_{12}{}^2 (x^1)^2 \Phi_{11}
  -4 (2-C_{12}{}^2)^2 C_{12}{}^2 x^1 x_3\\
 && -(4 C_{12}{}^2+1) (C_{12}{}^1)^2  +4 (C_{12}{}^2-2)  C_{22}{}^1 (C_{12}{}^2)^2 \\
 && -(C_{22}{}^2)^2
  +2 (1-2 (C_{12}{}^2-1) C_{12}{}^2)  C_{12}{}^1 C_{22}{}^2\,.
\end{eqnarray*}
  It now follows that $\beta_1=0$ if and only if the Ricci tensor $\rho^\affcon$ of $\mathcal{M}$ is symmetric. Moreover  $\beta_1$ is a non-zero constant if and only if  either $C_{12}{}^2=0$, in which case $\beta_1=-\frac{(C_{12}{}^1+C_{22}{}^2)^2}{(C_{12}{}^1-C_{22}{}^2)^2}$, or $C_{12}{}^2=2$, and then $\beta_1=-\frac{(C_{12}{}^1+C_{22}{}^2)^2}{( 3 C_{12}{}^1+C_{22}{}^2)^2}$. 
  Further, if $\beta_1$ is non-zero, then  $\beta_2$ is generically non-constant since
\begin{eqnarray*}
\qquad\beta_2& = &
  \{
  (C_{12}{}^2+3)^2 (x^1)^2 \Phi_{11}
  +2 (C_{12}{}^2-2) (C_{12}{}^2+3)^2 x^1 x_3\\
 && -2  (C_{12}{}^2 + 3)^2  C_{12}{}^2 C_{22}{}^1
  -2 ((C_{12}{}^2 - 1) C_{12}{}^2 + 3) (C_{22}{}^2)^2\\
&&  -2 ( (4 C_{12}{}^2 + 9) C_{12}{}^2 + 6) (C_{12}{}^1)^2\\
 && -2 ( (3 C_{12}{}^2 - 4)C_{12}{}^2 - 9) C_{12}{}^1  C_{22}{}^2 
  \}\Delta^{-1}\,,
  \end{eqnarray*}
\item Suppose $C_{11}{}^2=0$, $C_{12}{}^2=C_{11}{}^1$, and
$\rho^\mathfrak{H}$ is non-degenerate. Then we have
$ \beta_1= (C_{12}{}^1+C_{22}{}^2)^2\Delta^{-1}$  where
\begin{eqnarray*}
 \Delta&=&
  2 C_{11}{}^1 (x^1)^2 \Phi_{11}
  -4 (C_{11}{}^1)^2 x^1x_3-(C_{22}{}^2)^2\\
&&  -(4 (C_{11}{}^1)^2+1) (C_{12}{}^1)^2
 -4C_{11}{}^1 C_{22}{}^1+2 C_{12}{}^1 C_{22}{}^2 \,.
\end{eqnarray*}
  Therefore $\beta_1=0$ if and only if the Ricci tensor of $\mathcal{M}$ is symmetric. Moreover, one has that  $\beta_1$ is a non-zero constant  if and only if   $C_{11}{}^1=0$, in which case $\beta_1=-\frac{(C_{12}{}^1+C_{22}{}^2)^2}{(C_{12}{}^1-C_{22}{}^2)^2}$. Furthermore, if  
  $\beta_1\neq 0$, then  
 \begin{eqnarray*}
 \beta_2 &= &
  \{
  4 (C_{11}{}^1 + 1)^2 (x^1)^2 \Phi_{11}
  -8  (C_{11}{}^1 + 1)^2   C_{11}{}^1  x^1x_3\\
&& -2 (C_{11}{}^1 + 2) (C_{22}{}^2)^2
  -8 (C_{11}{}^1 + 1)^2 C_{22}{}^1\\
&&-2 (C_{11}{}^1 (8 C_{11}{}^1 + 9) + 2) (C_{12}{}^1)^2
  +4 (3 C_{11}{}^1 + 2) C_{12}{}^1 C_{22}{}^2
  \}\Delta^{-1}\,.
\end{eqnarray*}
\end{enumerate}  
  \medskip
  Let $T=\partial_{x^2}\otimes dx^1$. 
  Proceeding in a completely analogous way as in Lemma~\ref{L6.1}, one constructs the invariants
  $\beta_1$ and $\beta_2$.
  Example~\ref{E4.4} now leads the following two possibilities.
 \begin{enumerate}
 \item Suppose $C_{22}{}^1=0$, $C_{22}{}^2=0$, and $\rho^\mathfrak{H}$ is non-degenerate.
 One then has that
  $ \beta_1= (C_{12}{}^1)^2\Delta^{-1}$
  where
\begin{eqnarray*}
  \Delta&=&
  (C_{12}{}^1)^2 
  \{
  -2 (x^1)^2 \Phi_{22}
  -4 C_{12}{}^1 x^1x_4\\
 && -4 C_{11}{}^1 C_{12}{}^2
  +4 C_{11}{}^2 C_{12}{}^1-1
  \}\,.
\end{eqnarray*}
  One now checks that  $\beta_1$ is never constant in this case. Moreover, if $\rho^\mathfrak{H}$ is non-degenerate and $\beta_1\neq 0$, then
\begin{eqnarray*}
  \beta_2& = &
  (C_{12}{}^1)^2
  \{
  (x^1)^2 \Phi_{22}
  +2 C_{12}{}^1 x^1x_4
  -12 C_{12}{}^2
  -2 C_{11}{}^2 C_{12}{}^1\\
 &&\qquad -4
  - 2 (C_{11}{}^1)^2
  -8 (C_{12}{}^2)^2
  -6 (C_{12}{}^2+1) C_{11}{}^1
  \}\Delta^{-1}\,.
\end{eqnarray*}
  It follows that $\beta_2$ is constant if and only if $2 C_{11}{}^1 + 4 C_{12}{}^2+3=0$, in which case $\beta_2=-\frac{1}{2}$.
\item Suppose $C_{22}{}^1=0$ and $C_{22}{}^2=C_{12}{}^1$. Then $\rho^\mathfrak{H}$ is non-degenerate if and only if $C_{12}{}^1 C_{12}{}^2\neq 0$. In this case one has
\begin{eqnarray*} 
\qquad\beta_1&=& -(C_{12}{}^2)^{-2}\,,
  \\
  \beta_2&=&- \left((x^1)^2 \Phi_{22}-2 C_{12}{}^1 x^1x_4
  -4 (C_{12}{}^2)^2-2 C_{12}{}^2 \right)(C_{12}{}^2)^{-2}\,.
\end{eqnarray*}
  In contrast with the previous cases, $\beta_1$ is constant while $\beta_2$ is never constant.
\end{enumerate} \end{example}

\begin{remark}\rm The fact that 
$\operatorname{Tr}(R_\mathfrak{H}^\mathfrak{H})\in\Lambda^2(\mathfrak{H}^*)$,
$\omega^\mathfrak{H}=
\operatorname{Tr}({}^g\nabla R)^\mathfrak{H}/\Omega^\mathfrak{H}\in\mathfrak{H}^*$, and
$d\omega^\mathfrak{H}=\Omega^\mathfrak{H}$ is, of course, not true for a general 
Walker manifold. This observation perhaps can be
useful in studying when a general Walker manifold is one of our special
examples. All of these are pull-backs of similar identities on the base.
\end{remark}

\end{document}